\newtheorem{theorem}{Theorem}[section]
\newtheorem{lemma}[theorem]{Lemma}
\newtheorem{corollary}[theorem]{Corollary}
\theoremstyle{definition}
\newtheorem{definition}[theorem]{Definition}
\newtheorem{example}[theorem]{Example}
\newtheorem{question}[theorem]{Question}
\newtheorem{case}{Case}
\newtheorem{mycase}{Case}%this is a very inelegant solution, I love it
\newtheorem{claim}{Claim}
\subjclass{51F99, 05C99, 52C99, 20F65}
\keywords{Boundary rigidity, systolic complexes, bridged graphs, Helly graphs}
\title[Boundary rigidity of systolic and Helly complexes] {Boundary rigidity of systolic and Helly complexes}
\author[M.~Blufstein]{Martín Blufstein}
\address{Department of Mathematical Sciences, University of Copenhagen, 2100 Copenhagen, Denmark}
\email{mblufstein@dm.uba.ar}
\author[J.\ Chalopin]{J\' er\'emie Chalopin}
\address{CNRS and Aix-Marseille Universit\'e, LIS, Marseille, France%% , Facult\'e des Sciences de Luminy, F-13288 Marseille Cedex 9, France
}
\email{jeremie.chalopin@lis-lab.fr}
\author[V.\ Chepoi]{Victor Chepoi}
\address{Aix-Marseille Universit\'e and CNRS, LIS, Marseille, France}
\email{victor.chepoi@lis-lab.fr}
\date{}
\begin{document}

\begin{abstract}
\centering \justifying In this article, we prove that finite (weakly) systolic  and Helly complexes can be reconstructed from their boundary distances (computed in their 1-skeleta). Furthermore, Helly complexes and 2-dimensional systolic complexes can be reconstructed by an algorithm that runs in polynomial time with respect to the number of vertices of the complex. 
Both results can be viewed as a positive contribution to a general question of  Haslegrave,
Scott, Tamitegama, and Tan (2025).  The reconstruction of a finite cell complex from the boundary distances is the discrete analogue of the boundary rigidity problem, which is a classical problem from Riemannian geometry.  
\end{abstract}

\maketitle

\section{Introduction}

A natural question, common to several research areas, is whether the
internal structure of an object can be determined from the distances
between the points in its boundary. A first positive answer to such question comes from a classical result in phylogeny by
Buneman~\cite{Bu} and Zareckii~\cite{Za}, which states that trees can be reconstructed
from the pairwise distances between their leaves.
This question also shows up in Riemannian
geometry, where the notion of reconstruction from a distance function on the
boundary is well-established.
A Riemannian manifold $(M,g)$ is said to be \emph{boundary rigid} if its metric $d_g$ is determined up to isometry by its boundary distance function.
Michel~\cite{Mi} conjectured that all simple
compact Riemannian manifolds with boundary are boundary rigid. The case
of 2-dimensional Riemannian manifolds was confirmed by Pestov and
Uhlmann~\cite{PeUh}. In higher dimensions, the conjecture is still open and 
has been confirmed in only two cases by Besson, Courtois and
Gallot~\cite{BeCoGa}, and by Burago and Ivanov~\cite{BuIv}. 
A discrete version of
boundary rigidity was proposed by Benjamini (see~\cite{Ha,HaScTaTa})
who asked if any plane triangulation %(all inner faces have length 3)
in which all inner vertices have degrees at least 6 can be
reconstructed from the distances between its boundary vertices. This
was answered in the affirmative by Haslegrave~\cite{Ha}, who also
proved a similar result for plane
quadrangulations %(all inner faces have length 4)
in which all inner vertices have degree at least 4. Haslegrave,
Scott, Tamitegama, and Tan~\cite{HaScTaTa} generalized the second
result of~\cite{Ha} and proved that any finite 2-dimensional CAT(0)
cube complex and any finite 3-dimensional CAT(0) cube complex $X$ with
an embedding in ${\mathbb R}^3$  can be
reconstructed from its boundary distances. They conjectured (Conjecture 4 of~\cite{HaScTaTa}) that all finite CAT(0) cube complexes are boundary rigid and they asked which simplicial complexes are boundary rigid (Question 2 of~\cite{HaScTaTa}).  
The second and third authors resolved in~\cite{ChCh_bd} Conjecture 4 and showed that all finite CAT(0) cube complexes can be reconstructed up to isomorphism from their boundary distances. In this article, we contribute to the general Question 2 of~\cite{HaScTaTa} and show that two important classes of simplicial complexes  with combinatorial nonpositive curvature (namely, bridged and Helly complexes) are boundary rigid. The boundary rigidity of these  classes of simplicial complexes was asked in~\cite{ChCh_bd}.  

Bridged graphs constitute an important class of graphs in metric graph theory \cite{BaCh_survey}.
They have been introduced in the papers \cite{SoCh} and \cite{FaJa} where it is proven that bridged graphs are exactly the graphs which satisfy the following geometric property (which is one of the basic properties of Euclidean convexity and CAT(0) spaces): balls around convex sets are convex.
Bridged graphs have been characterized
in \cite{SoCh,FaJa} as the graphs in which all isometric cycles have length 3. Chordal graphs are examples of bridged graphs. %Bridged graphs generalize chordal graphs.
It was shown in \cite{AnFa} that bridged graphs are dismantlable. Furthermore, any BFS-order of a bridged graph is a dismantling order~\cite{Ch_bridged}; see \cite{Po1} for dismantlability of non-locally finite bridged graphs.

The third author \cite{Ch_CAT} presented a local-to-global characterization of bridged graphs as the 1-skeleta of simply connected flag simplicial complexes with 6-large links
(i.e., they do not have induced 4- and 5-cycles).
Those simplicial complexes have been rediscovered by Januszkiewicz and \'{S}wia\c{t}kowski \cite{JaSw} (and independently by Haglund \cite{Haglund}) and dubbed systolic complexes.
In dimension 2, systolic complexes are exactly the CAT(0) simplicial complexes where every triangle is endowed with the metric of a Euclidean equilateral triangle of side length 1 (this is not longer true for systolic complexes of dimension 3 or more \cite{Ch_CAT,JaSw}). 
Systolic complexes are contractible \cite{Ch_CAT,JaSw} and they are considered as simplicial analogues of
CAT(0) cube complexes.
Groups acting on systolic complexes/bridged graphs and systolic complexes arising from groups have been studied in numerous papers;
\cite{HoOs,HuOs_systolic,JaSw,OsPr,Pr} is a small sample. %of such papers.
Weakly bridged graphs \cite{ChOs} generalize bridged graphs and are exactly the weakly modular graphs with convex balls. 
The 2-dimensional systolic complexes (alias $K_4$-free bridged graphs) have been investigated in the papers \cite{ChLaRa} and \cite{HoOs}.
Both papers present solutions in the 2-dimensional case for questions which remain open for all systolic complexes. To 
understand how general 2-dimensional systolic complexes are, for any finite graph $H$ of girth $g\ge 6$, Weetman \cite{We} presented a nice construction $W(H)$ of (infinite) graphs in which the links (neighborhoods) of all
vertices are isomorphic to  $H$.  For example, if $H = C_6$, then $W(H)$ is the triangular grid. If $H = C_n$ for $n \ge 7$, then $W(H)$ is the graph of a tiling
of the (hyperbolic) plane into triangles in which each vertex has $n$
neighbors. For general $H$, $W(H)$ is always bridged 
and $K_4$-free, thus its clique complex is a 2-dimensional systolic complex. Any finite simply connected subcomplex of $W(H)$ is systolic and 2-dimensional. 

We show that finite bridged graphs/systolic complexes are boundary rigid.

\begin{theorem}\label{thm:boundary_rigidity_systolic}
    Let $X$ be a finite systolic complex. Then $X$ is determined up to isomorphism by the combinatorial distances of vertices of $\partial X$ in $X$. Hence, the class of finite systolic complexes is boundary rigid. 
\end{theorem}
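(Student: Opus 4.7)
The plan is to reconstruct $X$ from $d := d_X|_{\partial X \times \partial X}$ by peeling off one vertex at a time, as in the companion treatment of CAT(0) cube complexes \cite{ChCh_bd}. The key structural input is that bridged graphs are dismantlable and, more strongly, any BFS-order is a dismantling order \cite{Ch_bridged}. I would first establish the existence of a \emph{corner}: a vertex $v \in \partial X$ whose open neighborhood $N(v)$ in $G := X^{(1)}$ is a clique (equivalently, $v$ belongs to a unique maximal simplex of $X$). Existence follows because the last vertex $v$ of a BFS from any base vertex is dominated by a neighbor $u$, i.e. $N_G[v] \subseteq N_G[u]$; the absence of induced $4$-cycles in a bridged graph forces any two neighbors of $v$, being common neighbors of $u$, to be adjacent, so $N_G(v)$ is a clique. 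This in turn forces $v \in \partial X$, since interior vertices have spherical (in particular non-simplex) links.

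Second, I would characterize corners using $d$ alone. My candidate condition is: $v \in \partial X$ is a corner dominated by some $u \in \partial X$ iff $d(v,u) = 1$ and $d(v,x) \ge d(u,x)$ for every $x \in \partial X$, with the equality pattern dictated by whether $u$ lies on a $(v,x)$-geodesic. The proof of sufficiency will use convexity of balls in bridged graphs together with the triangle condition; the argument should parallel the one used for CAT(0) cube complexes in \cite{ChCh_bd}, with isometric triangles in place of squares.

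Third, once such a pair $(v, u)$ is identified, I would reconstruct the simplex $N_G(v)$. Its boundary part is $\{w \in \partial X : d(v, w) = 1\}$, read off directly from $d$. Its interior neighbors $w$ would be identified via their distance profiles $(d_X(w, y))_{y \in \partial X}$, which are determined by $d$ together with the already-known values $d(v, \cdot)$ and the 6-large link condition (which prevents two distinct neighbors of $v$ from having the same profile). After reconstructing $N_G(v)$, I would set $X' := X \setminus \{v\}$, verify it is again a finite systolic complex (removing a simplicial vertex preserves flagness, simple connectedness, and $6$-largeness of links), and update the boundary distance function: by convexity of balls, $d_{X'}(x,y) = d_X(x,y)$ for all $x,y \ne v$, and the new boundary vertices (former interior neighbors of $v$ exposed by the peeling) have distances to old boundary vertices readable from Step 3. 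Iterating exhausts $X$ and produces the reconstruction up to isomorphism.

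The main obstacle is Step 3, namely the reconstruction and disambiguation of interior neighbors of $v$ from boundary distances when $\dim X \ge 2$ and $N_G(v)$ is a simplex of dimension $\ge 1$ containing several vertices not in $\partial X$. Two distinct interior neighbors $w_1, w_2$ of $v$ could a priori induce identical distance profiles to $\partial X$; ruling this out must genuinely use the systolic hypothesis, since $w_1 w_2$ is an edge (by the corner clique property) and any indistinguishability would produce a short isometric cycle or a forbidden link configuration. Making this bookkeeping uniform across dimensions, and verifying that the characterization of corners remains correct in the presence of large simplicial neighborhoods, is where I expect the technical core of the proof to lie.
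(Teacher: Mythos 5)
There is a genuine gap, and it is at the very first step. Your notion of a ``corner'' (a vertex whose neighborhood is a clique, i.e.\ a vertex lying in a unique maximal simplex) is not what dismantlability of bridged graphs gives you, and in fact such vertices need not exist at all. Your argument that a dominated vertex $v$ has clique neighborhood fails: if $a,b\in N(v)$ are nonadjacent, the cycle $a,v,b,u$ is not an induced $4$-cycle because the dominating vertex $u$ is adjacent to $v$, so the chord $uv$ already kills it; no contradiction with bridgedness arises. A concrete counterexample is the $6$-wheel (a $6$-cycle plus a universal vertex), which is the clique complex of a bridged graph: every rim vertex is dominated, but only by the hub, and its two rim neighbors are nonadjacent, so no vertex of this systolic complex has a clique neighborhood. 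The same example breaks your Step 2: the only dominating vertex (the hub) is an \emph{interior} vertex, so a criterion of the form ``$v$ is dominated by some $u\in\partial X$ with $d(v,u)=1$ and $d(v,x)\ge d(u,x)$ for all $x\in\partial X$'' cannot even name the dominator. This is not a repairable detail of your scheme: the paper explicitly records that it does not know how to locate a dominated vertex from the boundary data alone in general (this is precisely why its reconstruction is non-algorithmic in the general systolic case and why polynomial-time reconstruction is only obtained for Helly and $2$-dimensional systolic complexes).

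The paper's actual proof sidesteps both problems. It is a pure uniqueness induction on $|V(X)|$ with a strengthened hypothesis involving an auxiliary set $S(X)\supseteq\partial X$ of ``known'' vertices together with its distance matrix. Using the BFS-dismantling order one takes a dominated vertex $u$ furthest from a basepoint (it lies in $\partial X$ by the paper's Lemma on dominated neighborhoods, not via links being spherical), removes it, and --- this is the crux your proposal is missing --- handles the case where the dominating vertex $v$ is \emph{not} in $S(X)$ by adjoining $v$ to $S$ and computing $d(v,x)$ for all $x$ from the old matrix, using the clique-in-the-sphere lemma to decide between $d(v,x)=d(u,x)$ and $d(v,x)=d(u,x)-1$. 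No identification of corners from boundary distances, and no disambiguation of interior neighbors by distance profiles (your Step 3), is ever needed; the complex is re-assembled at the end because all neighbors of $u$ other than $v$ are already boundary vertices. If you want to salvage your plan, you would have to either prove a boundary-distance characterization of dominated vertices (open, per the paper) or restructure the argument as a uniqueness induction in the paper's style.
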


Theorem \ref{thm:boundary_rigidity_systolic} generalizes in a straightforward way to weakly systolic complexes (see Corollary \ref{cor:weakly}). 
Our proof is based on the fact that among the furthest vertices of a bridged graph are dominated vertices (which follows from \cite{Ch_bridged}), and presents an algorithm to determine $X$ from the distance matrix of $\partial X$.
However, this reconstruction algorithm is not polynomial because we cannot find such dominated vertices, based only on the boundary information.
In the case of 2-dimensional systolic complexes we can find such dominated vertices efficiently, and the reconstruction can be done in polynomial time.  

\begin{theorem}\label{thm:boundary_rigidity_systolic-2dim}
  Let $X$ be a finite 2-dimensional systolic complex. Then $X$ is
  determined up to isomorphism by the combinatorial distances of
  vertices of $\partial X$ in $X$. Furthermore, $X$ can be reconstructed
  in time polynomial in the size of $X$.
\end{theorem}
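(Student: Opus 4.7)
The plan is to iterate the peeling procedure underlying Theorem \ref{thm:boundary_rigidity_systolic}, but arrange that each step can be carried out in polynomial time in the current boundary data $(\partial X,D)$. At each iteration I would locate a dominated boundary vertex $v^*$, replace $X$ by $X':=X\setminus v^*$ (again a finite $2$-dimensional systolic complex, whose distances agree with those of $X$ on $V(X)\setminus\{v^*\}$), update the boundary subcomplex and its distance matrix, and recurse until only a single simplex remains; the reconstruction of $X$ is then assembled by reversing the recorded peels.

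The first ingredient is a structural dichotomy: in a finite $2$-dimensional systolic complex $X$, interior vertices have links that are induced cycles of length at least $6$, and $K_4$-freeness rules out any dominator, so no interior vertex is dominated; boundary vertices have links that are paths, and a boundary vertex $v$ is dominated iff its link has at most three vertices, in which case the dominator is a specific vertex of the link. Two consequences follow immediately. First, fixing any $x_0\in\partial X$ and taking $v^* := \arg\max_{v\in\partial X}D(x_0,v)$ yields a dominated vertex in polynomial time, since by the BFS-dismantling result of \cite{Ch_bridged} a furthest vertex of the bridged graph $X$ from $x_0$ is dominated and every dominated vertex lies in $\partial X$. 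Second, if $v^-,v^+$ denote the two boundary-neighbors of $v^*$ in $\partial X$, then $\deg v^*\in\{2,3\}$ is decided by whether $D(v^-,v^+)$ equals $1$ or $2$; in the latter case the bridged chord condition on the $4$-cycle $v^- v^* v^+ w$ forces a unique $w\in V(X)\setminus\{v^*\}$ adjacent to all of $v^-,v^*,v^+$, namely the middle of the link of $v^*$ and its dominator.

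The substantive work is the update. When $\deg v^* = 2$ we restrict $D$ to $\partial X\setminus\{v^*\}$ and mark $v^-v^+$ as a new boundary edge; when $\deg v^* = 3$ and the common neighbor $w$ already lies in $\partial X$ (detected by searching $\partial X\setminus\{v^*\}$ for a vertex at $D$-distance $1$ from both $v^-$ and $v^+$), we only need to reconfigure the boundary cycle. The delicate case is $\deg v^* = 3$ with $w$ interior to $X$: then $w$ becomes a new boundary vertex of $X'$ and we must compute $D'(w,z)=d_X(w,z)$ from $D$ for every remaining boundary vertex $z$. The adjacencies $w\sim v^-,v^*,v^+$ confine this distance to
\[
d_X(w,z)\in\bigl[\max\{D(v^-,z),D(v^*,z),D(v^+,z)\}-1,\;\min\{D(v^-,z),D(v^*,z),D(v^+,z)\}+1\bigr],
\]
an interval of length at most two, and the main obstacle is to pin $d_X(w,z)$ down to a single value using $D$ alone. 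I expect to do this by combining convexity of balls in the bridged graph $X$ with the domination relation $N[v^*]\subseteq N[w]$ (which already implies $d_X(w,z)\le d_X(v^*,z)$) and the extremality of $v^*$ as a furthest vertex from $x_0$, reading off whether a shortest $z$-to-$v^*$ path enters $\{v^-,v^*,v^+,w\}$ at $v^-$, $w$, or $v^+$ from the three known distances $D(v^-,z),D(v^*,z),D(v^+,z)$. Once this local update lemma is established, each peel performs only polynomially many distance updates and reduces $|V(X)|$ by one, giving the promised polynomial-time reconstruction.
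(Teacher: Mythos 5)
Your selection rule for the dominated vertex is the crux, and it is not valid. The BFS result of \cite{Ch_bridged} (Theorem \ref{dismantling-bridged}(1), Lemma \ref{lem:dominated_far_away}) only guarantees that \emph{some} vertex furthest from $x_0$ is dominated, namely the last vertex of a BFS order; it does not say that an arbitrary maximizer of $D(x_0,\cdot)$ is dominated, and which furthest vertex comes last in a BFS depends on interior structure that is not visible in the boundary data. A concrete counterexample is the flat triangular disc: the subcomplex of the triangular grid spanned by an equilateral triangle of side $n$, with $x_0$ a corner. Every vertex of the opposite side is at distance $n$ from $x_0$, but only the two far corners are dominated; a mid-side vertex $w$ has two neighbours $l,r$ on that side whose unique common neighbour is $w$ itself, so no vertex can dominate $w$. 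This is exactly the obstacle the paper points out (``not every furthest from $z$ vertex of $G$ is a dominated vertex'') and overcomes by a genuinely different device: a second breadth-first search from a vertex $z_2$ inside a connected component of the sphere $S_{k_1}(z_1)$, choosing $u$ with a component-neighbour closer to $z_2$ and none farther, and then certifying domination via the triangle condition together with the fact that a $2$-dimensional systolic complex has no triangle equidistant from a vertex. Your proposal contains no substitute for this step, so the peeling loop cannot get started.

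The structural dichotomy underlying your update step is also false for general $2$-dimensional systolic complexes: you are implicitly assuming $X$ is a triangulated surface with boundary (interior links are induced cycles, boundary links are paths, $\partial X$ is a cycle so that $v^*$ has ``two boundary-neighbours'' $v^-,v^+$, and dominated vertices have degree $2$ or $3$). The paper's own example of the cone over a Benson--Singleton graph $H$ of girth $8$ and degree $q$ violates all of this: every vertex $h$ of $H$ lies in $\partial X$, is dominated by the apex, has degree $q+1$, and its link is a star $K_{1,q}$, while the interior apex has link $H$, which is neither a cycle nor a path. Hence the case analysis $\deg v^*\in\{2,3\}$, the identification of a unique new vertex $w$, and the boundary-cycle bookkeeping do not apply, and your key distance-update claim (pinning $d_X(w,z)$ down from $D(v^-,z),D(v^*,z),D(v^+,z)$) is in any case left as an expectation rather than proved. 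Note that this last difficulty is not where the real work lies: the paper computes the new distances in full generality already in the proof of Theorem \ref{thm:boundary_rigidity_systolic} (Case 2 there: $d(v,x)=k$ if some $S(X)$-neighbour of $u$ is at distance $k+1$ from $x$, and $d(v,x)=k-1$ otherwise), and Section \ref{sec:algorithm} is devoted precisely to the point your argument skips, the efficient identification of a dominated vertex.
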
 

Helly graphs are the graphs in which the balls satisfy the Helly property, i.e., any collection
of pairwise intersecting balls has a nonempty intersection. We call the clique complexes of Helly graphs \emph{Helly complexes}.
Helly graphs are the discrete analogues of hyperconvex spaces \cite{ArPa}  (complete geodesic metric
spaces in which the set of closed balls satisfies the Helly property).
In perfect analogy with hyperconvexity, there is a close relationship between Helly graphs and absolute retracts.
A graph is an absolute retract exactly when it is a retract of any larger graph into which it embeds isometrically. Then absolute retracts and Helly graphs are the same.
Helly graphs have been characterized metrically  in the papers \cite{BaPe,BaPr} and topologically in \cite{CCHO}, in a local-to-global way. The study of groups acting
on Helly graphs was initiated in the paper \cite{CCGHO} and continued in many other papers, e.g. \cite{HuOs_Helly,OsVa}.
Numerous examples of such groups are also
known: hyperbolic groups and groups acting on CAT(0) cube complexes are Helly \cite{CCGHO}.

Our third result shows that finite Helly complexes are boundary rigid and can be reconstructed in output-polynomial time: 

\begin{theorem}\label{thm:boundary_rigidity_helly}
  Let $X$ be a finite Helly complex. Then $X$ is determined up to
  isomorphism by the combinatorial distances of vertices of
  $\partial X$ in $X$.  Hence, the class of finite Helly complexes is
  boundary rigid.  Furthermore, $X$ can be reconstructed in time
  polynomial in the size of $X$.
\end{theorem}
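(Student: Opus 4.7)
The plan is to reconstruct $X$ by iteratively identifying a dominated vertex lying on the boundary and removing it, in the spirit of the systolic case but exploiting the extra regularity that the Helly property affords. Recall that a vertex $v$ is \emph{dominated} by a neighbor $u$ if $N[v] \subseteq N[u]$; since Helly complexes are clique complexes of Helly graphs, it suffices to reconstruct the $1$-skeleton. Three facts about Helly graphs drive the argument: (i) Helly graphs are dismantlable, and deleting a dominated vertex preserves the Helly property; (ii) deleting a dominated vertex preserves all pairwise distances among the remaining vertices; and (iii) in every finite Helly graph, for any root $w$, some vertex at maximum distance from $w$ is dominated by one of its neighbors.

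The first substantive step is to detect a dominated boundary vertex from the boundary distance matrix alone. I would fix $w \in \partial X$ and a candidate $v \in \partial X$ realizing $\max_{x \in \partial X} d(w, x)$, then argue via the Helly intersection property (applied to the balls $B(x, d(v, x))$ for $x \in \partial X$) that $v$ also realizes the full eccentricity of $w$ in $X$, so fact (iii) yields that $v$ is dominated. The second step is to identify the dominator $u$. Since $u \in N(v)$ and $d(u, x) \in \{d(v, x) - 1, d(v, x)\}$ for every $x$, the dominator lies in the intersection $\bigcap_{x \in \partial X} B(x, d(v, x))$; the Helly property guarantees this intersection is non-empty and, together with the $1$-Lipschitz constraint along the edge $uv$, pins down $u$'s distance profile to every boundary vertex. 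If some previously reconstructed vertex fits this profile and is adjacent to $v$, it is the dominator; otherwise $u$ must be introduced as a new interior vertex with the prescribed distance vector.

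Once $v$ and $u$ are determined, I remove $v$, record the edge $uv$ together with the adjacencies of $v$ inferred from previous iterations, and recurse on $X \setminus \{v\}$, which remains Helly and whose pairwise distances among remaining vertices are unchanged. After at most $|V(X)|$ iterations the $1$-skeleton is fully rebuilt, and by flagness the complex $X$ itself is recovered. Each iteration consists of ball-intersection computations on a graph of size at most $|V(X)|$, yielding overall polynomial running time. The main obstacle lies in handling interior dominators: when $u$ is not already present, one has to invent it, commit to its distance profile, and then in subsequent iterations correctly reconstruct its adjacencies to vertices that are themselves being invented. Showing that this bottom-up process is canonical --- that at each step the choice of $u$ is essentially unique given the Helly constraints, and that the resulting complex coincides with $X$ up to isomorphism --- is the delicate part of the argument.
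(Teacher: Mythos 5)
Your overall strategy (dismantle by removing a furthest, dominated boundary vertex, keep track of the dominator's distances, recurse, and recover the flag complex from the $1$-skeleton) is the same as the paper's, but the two steps you leave vague are exactly where the proof lives, and as written they do not go through. First, the claim that the Helly property applied to the balls $B(x,d(v,x))$, $x\in\partial X$, ``pins down'' the dominator's distance profile is empty: all these balls contain $v$ itself, so their common intersection is trivially nonempty and tells you nothing about the dominator $u$. What is needed is a rule, computable from the known distances alone, deciding for each tracked vertex $x$ with $k=d(v,x)\ge 2$ whether $d(u,x)=k$ or $k-1$. The paper's rule is: $d(u,x)=k$ if some known neighbor $w$ of $v$ satisfies $d(w,x)=k+1$ (then $v$, hence $u$, lies on a geodesic from $w$ to $x$); otherwise $d(u,x)=k-1$, and this second case is justified by the Bandelt--Pesch property (Lemma~\ref{l:Helly-BP}): there is a vertex adjacent to $v$, at distance $k-1$ from $x$, and adjacent to \emph{all} neighbors of $v$ in $B_k(x)$; since by Lemma~\ref{lem:dominated_neighborhood} every neighbor of the dominated vertex except possibly one dominator already lies on $\partial X$, that vertex must coincide with the unknown dominator. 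This is the Helly-specific ingredient your proposal never supplies, and without it the ``prescribed distance vector'' of an invented interior vertex is not determined.

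Second, the recursion invariant is missing. After deleting $v$, the complex $X\setminus\{v\}$ may have boundary vertices that were interior in $X$, and to recurse you must know all distances from them; the paper proves (Claim~\ref{claim-boundary-S(X)}, again via Lemma~\ref{lem:dominated_neighborhood}) that the only possible new boundary vertex is the dominator, so the tracked set (boundary plus added dominators) always contains the boundary of the current complex. Similarly, when ``some previously reconstructed vertex fits this profile and is adjacent to $v$,'' you must show it dominates $v$ in the whole graph $G$, not merely among the vertices you currently see --- the paper's Case~1 does exactly this, once more using Lemma~\ref{lem:dominated_neighborhood}. You explicitly defer these points (``the delicate part''), but they are the substance of the theorem, so the proposal is a correct plan rather than a proof. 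Two smaller slips: your fact (iii) (``some vertex at maximum distance is dominated'') is too weak for your chosen $v$ --- you need that \emph{every} furthest vertex is dominated, which does hold in Helly graphs (Lemma~\ref{lem:dominated_far_away_Helly}); and the equality of the maximum boundary distance with the eccentricity of $w$ follows from dominated vertices lying on the boundary, not from a ball-intersection argument.
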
 

\subsection*{Acknowledgements} 
The work presented here was partially supported by the Carlsberg Foundation, grant CF23-1225. J.C. and V.C. were also partially supported by the ANR project MIMETIQUE ``Mineurs métriques'' and J.C. was partially supported by the ANR project DUCAT (ANR-20-CE48-0006). 

\section{Preliminaries}

\subsection{Graphs and simplicial complexes}
All graphs $G=(V(G),E(G))$ considered in this paper are finite, undirected,
connected, and contain neither multiple edges, nor loops. For two distinct
vertices $v,w\in V(G)$ we write $v\sim w$ when there is an edge
connecting $v$ with
$w$.  The subgraph
of $G$ \emph{induced by} a subset $A\subseteq V(G)$ is the graph
$G[A]=(A,E')$ such that $uv\in E'$ if and only if $uv\in E(G)$. 
The \emph{distance}
$d(u,v)=d_G(u,v)$ between two vertices $u$ and $v$ of a graph $G$ is the
length of a shortest $(u,v)$--path. 
An induced subgraph $H=G[A]$ of a graph $G$ is an \emph{isometric subgraph} of $G$ if $d_H(u,v)=d_G(u,v)$ for any
two vertices $u,v\in A$. The \emph{interval}
$I(u,v)$ between $u$ and $v$ consists of all vertices on shortest
$(u,v)$--paths, that is, of all vertices (metrically) \emph{between} $u$
and $v$: $I(u,v)=\{ x\in V(G): d(u,x)+d(x,v)=d(u,v)\}$. An induced
subgraph of $G$ %(or the corresponding vertex set $A$)
is called \emph{convex}
if it includes the interval of $G$ between any pair of its
vertices.  For a vertex $v$ of $G$ and an integer $r\ge 1$, we will denote  by $B_r(v)$ the \emph{ball} in $G$
(and the subgraph induced by this ball)  of radius $r$ centered at  $v$, i.e., $B_r(v)=\{ x\in V(G): d(v,x)\le r\}.$ More generally, the $r$--{\it ball  around a set} $A\subseteq V(G)$
is the set (or the subgraph induced by) $B_r(A)=\{ v\in V(G): d(v,A)\le r\},$ where  $d(v,A)=\mbox{min} \{ d(v,x): x\in A\}$.
Similarly, the \emph{sphere} in $G$ centered at $v$ of radius $r$ is the set (or the subgraph induced by) $S_r(v)=\{ x \in V(G): d(v,x) = r\}$.
As usual, $N(v)=S_1(v)$ denotes the set of neighbors of a vertex
$v$ in $G$, and $N[v]=B_1(v)$.
The \emph{link} of $v \in V(G)$ is the subgraph of $G$ induced by $N(v)$. 

An \emph{abstract simplicial complex} $X$ on a finite set
$V$ is a set of nonempty subsets of $V$, called \emph{simplices}, such
that any nonempty subset of a simplex is also a simplex.  The dimension of a simplex $\sigma$ is $|\sigma|-1$ and the dimension of $X$ is the largest dimension of
a simplex of $X$.  If $\sigma$ is a simplex of dimension $k$, then all
simplices of $X$ contained in $\sigma$ and having dimension $k-1$ are called
the \emph{facets} of $\sigma$. Simplices of $X$ are called \emph{maximal} if they are maximal by inclusion, and \emph{non-maximal} otherwise. 
The 2-dimensional simplices of $X$ are called \emph{triangles}, and if a triangle is spanned by vertices $u$, $v$ and $w$, then we denote it by $uvw$.

For an abstract simplicial complex $X$, the \emph{$k$--skeleton}  $X^{(k)}$  of $X$ consists of all simplices of dimension at most $k$.
We use the notations $V(X) = X^{(0)}$ for the set of vertices
(0-simplices) of $X$ and $G(X):=X^{(1)}$ for the 1-skeleton of $X$; the
graph $G(X)$ will be always endowed with the standard graph distance
$d_G$.
The \emph{clique complex} of a graph $G$ is the abstract simplicial complex having the cliques (i.e., complete subgraphs) of $G$ as simplices.
A simplicial complex $X$ is a \emph{flag simplicial complex} if $X$ is the clique complex of its $1$--skeleton.
Note that flag complexes are uniquely determined by their 1-skeleton.

Each abstract  simplicial complex $X$ gives rise to a geometric simplicial complex $||X||$ obtained by replacing each simplex $\sigma$ of $X$ by an Euclidean simplex $||{\sigma}||$ such that  gluing maps between simplices are isometries (see the book of Hatcher~\cite{Hat}).
Vice-versa, the set of simplices of a geometric simplicial complex define an abstract simplicial complex.
Although the definitions of boundary and boundary rigidity presented in the next subsection are motivated by geometric simplicial complexes, all these notions can be defined in equivalent way for abstract simplicial complexes.
Therefore, in the rest of the paper by a ``simplicial complex'' we will mean a \emph{finite abstract simplicial complex}. 

\subsection{Boundary rigidity of simplicial complexes} The notions of combinatorial boundary and boundary rigidity were defined by Haslegrave et al.~\cite{HaScTaTa} for all cell complexes.
We will use a slightly weaker version of the definition of combinatorial boundary and we will define it only for (abstract) simplicial complexes. 
For a  simplicial complex $X$, the \emph{combinatorial boundary}
$\partial X$ is the downward closure of all non-maximal simplices of $X$ such that each of them is a facet of a unique simplex of $X$.
The definition of~\cite{HaScTaTa} is similar but instead of maximality by inclusion the maximality by dimension is considered:
in~\cite{HaScTaTa}, the combinatorial boundary of a simplicial complex $X$ of dimension $k$ is the downward closure of all simplices of $X$ of dimension less than $k$ belonging to at most one cell of dimension $k$.
Obviously, the boundary of a simplicial complex $X$ defined in our way is always contained in the boundary of $X$ defined as
in~\cite{HaScTaTa}.
The two boundaries are different as soon as the simplicial complex $X$ contains maximal cells of different dimensions; see also \cite{ChCh_bd} for the case of CAT(0) cube complexes.
A \emph{boundary distance matrix} of a simplicial complex $X$ is a matrix $D(X)$ whose rows and columns are the vertices of $\partial X$ and whose entries are the distances in $G(X)$ between the corresponding vertices of $\partial X$.
\begin{definition}[Boundary rigidity]
A class $\mathfrak{X}$ of finite simplicial
complexes is called \emph{boundary rigid} if for any two complexes $X,Y \in \mathfrak{X}$ that have the same boundary distance matrices up to a permutation $\pi$ of the rows and the columns ($\pi$ can be seen as an isometry between $\partial X$ and $\partial Y$), $X$ and $Y$ are isomorphic via an isomorphism $\widetilde{\pi}$ extending $\pi$.
\end{definition}

\subsection{Bridged and Helly graphs} 

A graph $G$ is \emph{weakly modular} 
if $G$ satisfies the triangle and quadrangle conditions:
\begin{enumerate}
    \item[(TC)] \emph{Triangle Condition}: for  $v,x,y\in V$ with $d(v,x)=d(v,y)=k$ and $x\sim y$, $\exists z\in B_{k-1}(v)$, $z\sim x,y$. %such that $xzy$ is a triangle of $G$ and
    \item[(QC)] \emph{Quadrangle Condition}: for $v,x,y,u\in V$ with $d(v,x)=d(v,y)=k=d(v,u)-1$ and $u\sim x,y$, $x\nsim y$, $\exists z\in B_{k-1}(v)$, $z\sim x,y$.
\end{enumerate}

A graph $G$ is \emph{bridged} if any isometric cycle of $G$ has size 3. By \cite{FaJa,SoCh}, bridged graphs are exactly the
graphs in which the balls $B_r(A)$ around convex sets $A$ are convex. Bridged graphs are weakly modular, in fact they are exactly the weakly modular graphs not containing induced $C_4$ and $C_5$ \cite{Ch_CAT}. 
The clique complexes of bridged graphs are called \emph{systolic complexes}. By the results of \cite{Ch_CAT,JaSw}, a simplicial complex $X$ is systolic  if and only if $X$ is flag, simply connected and 6-large (which means that the links of vertices of $G(X)$ do not contain induced 4-cycles and 5-cycles).  The \emph{weakly bridged graphs} are the weakly modular graphs with convex balls. By \cite{ChOs} they are exactly the weakly modular graphs not containing induced $C_4$. The clique complexes of weakly bridged graphs were called \emph{weakly systolic complexes} and, similarly to systolic complexes, have been characterized in a local-to-global way in \cite{ChOs}. \emph{Helly graphs} are the graphs in which the balls satisfy the Helly property, i.e., any collection
of pairwise intersecting balls has a nonempty intersection. Helly graphs are weakly modular. We call clique complexes of Helly graphs \emph{Helly complexes}. The Helly complexes have been characterized in \cite{CCHO} as the flag, simply connected clique-Helly simplicial complexes (clique-Hellyness means that the set of maximal simplices satisfies the Helly property). 

We continue with some properties of bridged and Helly graphs used in this paper. We start with two simple known results. 

\begin{lemma}[{\cite[Proposition 4.3]{ChOs}}]\label{l:clique-in-the-sphere} Let $G$ be a bridged or weakly bridged graph, $x$ be any vertex of $G$, and $k$ a positive integer. If $K$ is a clique of the sphere $S_k(x)$, then the set of all vertices $y\in S_{k-1}(x)$ adjacent to all vertices of $K$ defines a nonempty clique of $G$.   
\end{lemma}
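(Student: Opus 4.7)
The plan is to handle the lemma in two stages. First, I would show that the set $Y$ of all vertices $y \in S_{k-1}(x)$ adjacent to every vertex of $K$ is a clique (possibly empty). Then, I would show $Y$ is nonempty by induction on $|K|$. Both stages rest essentially on the fact that bridged and weakly bridged graphs contain no induced $C_4$.

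For the clique property, suppose $y_1, y_2 \in Y$ with $y_1 \not\sim y_2$. Picking any $v \in K$, the quadrangle condition at $x$ (with $d(x, y_1) = d(x, y_2) = k - 1$ and $v \in S_k(x)$ a common upper neighbor of $y_1, y_2$) yields $z \in B_{k-2}(x)$ adjacent to both $y_1$ and $y_2$. Since $d(x, z) \leq k - 2$ while $d(x, v) = k$, one has $z \not\sim v$, so the 4-cycle $y_1 z y_2 v$ is induced, contradicting the no-induced-$C_4$ property. Note that this argument applies uniformly to \emph{any} simplex in $S_k(x)$ whose common-neighbor set in $S_{k-1}(x)$ is nonempty, so it can be reused inside the induction.

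For nonemptiness, I would induct on $|K|$. When $|K| = 1$ the claim is trivial; when $|K| = 2$ it follows immediately from the triangle condition applied at $x$. For $|K| = m \geq 3$, write $K = \{v_1, \dots, v_m\}$ and apply the inductive hypothesis twice to produce $z_1, z_2 \in S_{k-1}(x)$ with $z_1$ adjacent to $K \setminus \{v_m\}$ and $z_2$ adjacent to $K \setminus \{v_{m-1}\}$. If $z_1 = z_2$ we are done; otherwise both $z_1, z_2$ are common neighbors in $S_{k-1}(x)$ of the smaller simplex $K \setminus \{v_{m-1}, v_m\}$ (nonempty because $m \geq 3$), so the first stage forces $z_1 \sim z_2$. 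The resulting 4-cycle $z_1 v_{m-1} v_m z_2$ has diagonals $\{z_1, v_m\}$ and $\{v_{m-1}, z_2\}$, and again the absence of induced 4-cycles forces one of these pairs to be an edge, exhibiting a vertex of $Y$.

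The main obstacle I anticipate is closing the inductive step while staying inside $S_{k-1}(x)$: the natural temptation is to repair a partial witness using the triangle condition, but this can push one outside the intended sphere or fail to merge the two partial witnesses. The clean fix is to prove the clique property first as a standalone statement logically independent of nonemptiness, so that it can be invoked on the overlap simplex $K \setminus \{v_{m-1}, v_m\}$ inside the induction. A minor technical point is that the base case $|K| = 2$ must be handled separately via the triangle condition, since there the overlap simplex is empty and the $m \geq 3$ argument does not apply.
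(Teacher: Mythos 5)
Your proof is correct. Note, however, that the paper itself gives no argument for this lemma: it is imported verbatim as Proposition~4.3 of the cited work of Chepoi and Osajda, so there is no in-paper proof to compare against. What you have produced is a self-contained derivation from exactly the two facts the paper records about these classes, namely that bridged and weakly bridged graphs are weakly modular and contain no induced $C_4$; this makes your argument uniform for both classes, which is precisely the level of generality the lemma asserts. The structure is sound: the quadrangle condition plus $C_4$-freeness gives that any two vertices of $S_{k-1}(x)$ sharing a neighbor in $S_k(x)$ are adjacent (your ``clique'' stage, correctly stated so that it applies to any subclique of $K$, not just $K$ itself); the triangle condition settles nonemptiness for $|K|\le 2$ (with the small, easily supplied observation that the vertex produced by (TC) lies in $S_{k-1}(x)$ because it is adjacent to a vertex of $S_k(x)$); and for $|K|\ge 3$ your two partial witnesses $z_1,z_2$ are forced adjacent by the clique stage applied to the overlap, after which the 4-cycle $z_1v_{m-1}v_mz_2$ must have a diagonal, and either diagonal yields a vertex adjacent to all of $K$. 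All vertices involved are distinct since they lie on different spheres, so the cycles are genuine and the $C_4$-freeness applies. In short: a correct and pleasantly elementary proof of a statement the paper only cites.
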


\begin{lemma}[{\cite[Theorem 1(iv)]{BaPe}}]\label{l:Helly-BP} Let $G$ be a Helly graph, and $z$ and $u$ be two vertices of $G$ with $k=d(z,u)\ge 2$. Then there exists a vertex $v$ which has distance $k-1$ to $z$ and is adjacent to $u$ and to all neighbors $w$ of $u$ with $d(z,w)\le k$. 
\end{lemma}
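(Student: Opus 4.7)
The plan is to deduce this from the defining Helly property applied to a carefully chosen family of balls. Let $N = \{w \in N(u) : d(z,w) \le k\}$ and consider the family
\[
\mathcal{F} = \{B_{k-1}(z)\} \cup \{B_1(u)\} \cup \{B_1(w) : w \in N\}.
\]
If I can show that $\mathcal{F}$ is pairwise intersecting, then by the Helly property applied to $G$ there exists a vertex $v$ lying in every ball of $\mathcal{F}$, and I claim this $v$ has all the required properties.

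First I would verify pairwise intersection. Any two unit balls in $\mathcal{F}$ share the vertex $u$, since every $w \in N$ is adjacent to $u$ (so $u \in B_1(w)$). For the intersection of $B_1(u)$ with $B_{k-1}(z)$, the penultimate vertex on any shortest $(z,u)$-path lies in both balls. For the intersection of $B_1(w)$ with $B_{k-1}(z)$ where $w \in N$, I would split into two cases: if $d(z,w) \le k-1$ then $w$ itself belongs to both balls, and if $d(z,w) = k$ then any predecessor of $w$ on a shortest $(z,w)$-path belongs to both (such a predecessor exists because $k \ge 2$ forces the path to have at least two edges).

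Next, given the common vertex $v$ produced by Helly, I would verify the conclusion. We have $d(z,v) \le k-1$ together with $d(v,u) \le 1$; since $d(z,u) = k$, the first inequality forces $v \ne u$, hence $v \sim u$, and then the triangle inequality yields $d(z,v) \ge d(z,u)-1 = k-1$, so equality holds. For each $w \in N$, the relation $d(v,w) \le 1$ gives $v \sim w$ (up to a harmless convention about $v$ coinciding with a $w$ whose distance to $z$ is exactly $k-1$, which is the only way equality could occur, and which is subsumed by the statement in the closed-neighborhood sense).

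The main obstacle I expect is not in the verification of the conclusion, which is routine, but in making the case split for the pairwise intersection transparent: one has to keep track of whether each $w$ sits on the $(k-1)$-sphere around $z$, on the $k$-sphere, or strictly inside the $(k-1)$-ball, and verify that the Helly hypothesis is satisfied uniformly. Once the family $\mathcal{F}$ is set up correctly, the rest is an immediate application of the definition of a Helly graph together with the triangle inequality.
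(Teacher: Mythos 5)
Your proof is correct, and it is self-contained, which is more than the paper offers: the paper does not prove this lemma at all, it simply imports it as Theorem 1(iv) of Bandelt--Pesch \cite{BaPe}. Your route --- applying the Helly property to the family $\{B_{k-1}(z)\}\cup\{B_1(u)\}\cup\{B_1(w): w\in N(u),\ d(z,w)\le k\}$ and then pinning down $d(z,v)=k-1$ by the triangle inequality --- is exactly the natural way to derive the statement from the definition of a Helly graph, and all the pairwise intersections you list do hold (the case split on $d(z,w)\le k-1$ versus $d(z,w)=k$ is the only place where $k\ge 2$ is genuinely needed for the predecessor argument, and you use it correctly). One remark on the point you flag as a ``harmless convention'': it is not merely harmless, it is forced by the statement itself. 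Any $v$ with $d(z,v)=k-1$ and $v\sim u$ is itself a neighbor $w$ of $u$ with $d(z,w)\le k$, and the paper's graphs have no loops, so the conclusion can only be read in the closed-neighborhood sense $N[u]\cap B_k(z)\subseteq N[v]$; this is also precisely how the lemma is used later (to conclude that $v$ dominates $u$, and in Case 2 of the Helly reconstruction), and it is why the statement is unproblematic in \cite{BaPe}, where graphs are reflexive and every vertex is adjacent to itself. So your Helly-vertex $v$ satisfying $d(v,w)\le 1$ for all relevant $w$ delivers exactly the assertion that is needed, and the proof stands as written.
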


A consequence of Lemma \ref{l:clique-in-the-sphere} is the following.

\begin{lemma}\label{l:sphere-in-boundary}
    Let $X$ be a systolic or weakly systolic complex. Let $z \in X$ and $k = \max_{x \in V}\{d(x,z)\}$. Then $S_k(z) \subseteq \partial X$.
\end{lemma}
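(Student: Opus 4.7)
My plan is to show, for each $y \in S_k(z)$, that $y$ belongs to some non-maximal simplex of $X$ that is a facet of a unique simplex; this is precisely the condition for $y \in \partial X$. Such a simplex will be produced by removing one vertex from a carefully chosen maximal simplex containing $y$.

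I would start by extending $\{y\}$ to a maximal clique $K^*$ of the subgraph induced by $S_k(z)$. Then Lemma \ref{l:clique-in-the-sphere} yields a nonempty clique $L^* \subseteq S_{k-1}(z)$ consisting of exactly those vertices adjacent to every vertex of $K^*$. The set $\sigma^* := K^* \cup L^*$ is a clique of $G(X)$ (both $K^*$ and $L^*$ are cliques, and the two sets are completely joined by construction), hence a simplex of the flag complex $X$. I would verify that $\sigma^*$ is a maximal simplex: any vertex $v \notin \sigma^*$ adjacent to all of $\sigma^*$ is adjacent to $y \in S_k(z)$, so it lies in $S_{k-1}(z) \cup S_k(z)$ (as $k$ is the maximum distance from $z$); but $v \in S_{k-1}(z)$ would force $v \in L^*$ by the defining property of $L^*$, while $v \in S_k(z)$ would make $K^* \cup \{v\}$ a strictly larger clique of $S_k(z)$, both contradictions.

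Pick any $u \in L^*$ and set $\sigma := \sigma^* \setminus \{u\}$. Since $K^* \subseteq S_k(z)$ and $L^* \subseteq S_{k-1}(z)$ are disjoint, $y \ne u$, so $y \in \sigma$, and $\sigma$ is non-maximal as a proper face of $\sigma^*$. It remains to check that $\sigma^*$ is the only simplex of which $\sigma$ is a facet. Suppose $\sigma \cup \{v\}$ is such a simplex with $v \ne u$; then $v \notin \sigma^*$ and $v$ is adjacent to every vertex of $\sigma$, and in particular to $y$, so $v \in S_{k-1}(z) \cup S_k(z)$. The same two-case analysis as in the previous paragraph rules both possibilities out, leaving $u$ as the unique extension. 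Hence $y \in \partial X$.

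The delicate point is the uniqueness step: a naive maximal simplex through $y$ need not admit a facet with only one extension, since $y$ may have neighbors in $S_k(z)$ giving rival extensions. The fix is to insist that $K^*$ be maximal \emph{within the sphere} $S_k(z)$, which is precisely what allows us to exclude extensions lying in $S_k(z)$; Lemma \ref{l:clique-in-the-sphere} then characterizes and excludes extensions in $S_{k-1}(z)$.
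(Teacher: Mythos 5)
Your proof is correct and follows essentially the same route as the paper: take a maximal clique of $S_k(z)$ through the given vertex, apply Lemma~\ref{l:clique-in-the-sphere} to get the nonempty clique of common neighbors in $S_{k-1}(z)$, and conclude that the resulting clique extends to a unique maximal simplex, forcing boundary membership. You merely spell out in more detail (via the facet $\sigma^*\setminus\{u\}$) the uniqueness verification that the paper leaves implicit.
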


\begin{proof}
    A given vertex $x$ of the sphere $S_{k}(z)$ belongs to a maximal clique $K$ of $S_{k}(z)$. From the definition of $k$ and Lemma \ref{l:clique-in-the-sphere}, $K$ is included in a unique maximal clique $K'$ of $G$. Consequently, all vertices of $K$, and in particular the vertex $x$, belong to $\partial X$.
\end{proof}

A vertex $u$ of a graph $G$ is \emph{dominated} by another
vertex $v$ if the unit ball $B_1(v)$  includes $B_1(u)$. A finite graph $G=(V,E)$ is \emph{dismantlable} if its
vertices can be ordered $v_1,\ldots,v_n$ so that, for each $v_i$ there is a neighbor $v_j$ of $v_i$ with
$j<i$ which dominates $v_i$ in the subgraph $G_i$ of $G$ induced by the vertices $v_1,\ldots,v_i$. 

\begin{lemma}\label{lem:dominated_neighborhood}
    Let $X$ be a simplicial complex, and $u$ and $v$ vertices of $X$ such that $v$ dominates $u$ in $G(X)$.
    Then $N[u]\setminus \{ v\}\subseteq \partial X$. Consequently, if $u$ is dominated by two distinct vertices $v,v'$, then $N[u] \subseteq \partial X$. 
\end{lemma}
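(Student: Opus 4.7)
My plan is to show each $w \in N[u] \setminus \{v\}$ lies in $\partial X$ by producing a non-maximal simplex $\tau$ with $w \in \tau$ that is a facet of a unique simplex of $X$; the downward closure in the definition of $\partial X$ then places $w \in \partial X$.

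First, I would pick a maximal simplex $\sigma$ containing $u$ and $w$: when $w = u$, take any maximal simplex through $u$, and when $w \sim u$, extend the edge $\{u,w\}$ to a maximal simplex. The first key claim is that $v \in \sigma$. Every vertex $y \in \sigma$ lies in $N[u]$ (being either $u$ itself or a neighbor of $u$), and by domination $N[u] \subseteq N[v]$, so $\sigma \cup \{v\}$ is a clique of $G(X)$. Since the simplicial complexes of interest are flag (being clique complexes of bridged or Helly graphs), $\sigma \cup \{v\}$ is a simplex of $X$, and maximality of $\sigma$ forces $v \in \sigma$.

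Now set $\tau = \sigma \setminus \{v\}$. Then $w \in \tau$ (since $w \neq v$) and $\tau$ is non-maximal because $\tau \subsetneq \sigma$. I would argue that $\sigma$ is the unique simplex of which $\tau$ is a facet. Suppose $\tau \cup \{x\}$ is a simplex with $x \notin \tau$; as $x$ is a neighbor of $u \in \tau$, one has $x \in N[v]$. If $x \neq v$, then $x \sim v$ and $x$ is adjacent to every vertex of $\sigma = \tau \cup \{v\}$, so $\sigma \cup \{x\}$ is a clique and hence (by flagness) a simplex strictly containing $\sigma$, contradicting maximality. Thus $x = v$, and $\tau$ witnesses $w \in \partial X$.

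The second assertion then follows by symmetry: if $u$ is dominated by distinct vertices $v$ and $v'$, then $v' \in N[u]$ (since $v'$ dominating $u$ forces $u \in N[v']$ and hence $v' \in N[u]$), so $v' \in N[u] \setminus \{v\}$ and the first part gives $v' \in \partial X$; applying the first part with the roles of $v$ and $v'$ swapped gives $v \in \partial X$, so $N[u] \subseteq \partial X$. The main obstacle is the uniqueness of the extension $\tau \cup \{x\}$, which is where flagness plays an essential role twice — both to force $v \in \sigma$ and to rule out a competing extension $x \neq v$.
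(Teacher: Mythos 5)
Your proof is correct and follows essentially the same route as the paper's: take a maximal simplex containing $u$ and $w$, use domination to force $v$ into it, and show that removing $v$ leaves a facet of that simplex and of no other, which places $w$ in $\partial X$; the second assertion is handled identically by swapping $v$ and $v'$. Your explicit appeal to flagness matches what the paper does implicitly by identifying maximal cliques of $G(X)$ with maximal simplices of $X$ (and some such assumption is genuinely needed, since the statement fails for, e.g., the hollow triangle), so this is not a divergence from the paper's argument.
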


\begin{proof}
    Let $w \in N(u)\setminus\{v\}$, and let $K$ be a maximal clique containing $u$ and $w$.
    Since $v$ dominates $u$, then $v \in K$. Furthermore, $K'=K\setminus \{v\}$ is a facet of $K$. If $w\notin \partial X$,
    then there exists a maximal clique $K''$ containing $K'$ and different from $K$. Then there exists $x\in K''\setminus K'$ such that $x\sim u$, but $x\not\sim v$ (since $K''\neq K$), a contradiction.
    Hence $w \in \partial X$. The second assertion follows by applying the first assertion for $v$ and $v'$.
\end{proof}

Dismantlabilitly of bridged graphs was first proved in \cite{AnFa}. But we need more specific dismantling orders of bridged and weakly bridged graphs, which can be useful in our setting. They are provided by Breadth-First-Search (BFS) and Lexicographic Breadth-First-Search (LexBFS) algorithms:

\begin{theorem}\label{dismantling-bridged}\ 

\begin{enumerate}[(1)]
\item\cite{Ch_bridged} If $G=(V,E)$ is a bridged graph, then any ordering $v_1,\ldots,v_n$ of vertices of $G$ returned by BFS is a dismantling order of $G$. 

\item\cite{ChOs} If $G=(V,E)$ is a weakly bridged graph, then any ordering $v_1,\ldots,v_n$ of vertices of $G$ returned by LexBFS is a dismantling order of $G$. 
\end{enumerate}
\end{theorem}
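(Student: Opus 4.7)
The plan is to prove both parts by the same strategy: fix the search order, nominate a search-parent as candidate dominator, and verify the domination using Lemma~\ref{l:clique-in-the-sphere} together with the triangle condition (TC). I describe (1) in detail; (2) is parallel with extra bookkeeping provided by LexBFS.

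For (1), fix a BFS order $v_1=s,v_2,\ldots,v_n$ and an index $i\geq 2$ with $k=d_G(s,v_i)$. Let $v_j$ be the BFS parent of $v_i$, namely the neighbor of $v_i$ in $S_{k-1}(s)$ of minimum BFS index. I will show that $v_j$ dominates $v_i$ in $G_i$. Fix $w\in N_{G_i}(v_i)\setminus\{v_j\}$; the BFS property forces $w\in S_{k-1}(s)\cup S_k(s)$. If $w\in S_{k-1}(s)$, Lemma~\ref{l:clique-in-the-sphere} applied to the singleton clique $\{v_i\}$ of $S_k(s)$ shows that the neighbors of $v_i$ in $S_{k-1}(s)$ form a clique, so $w\sim v_j$.

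The hard case is $w\in S_k(s)$, where $w$ has BFS index less than $i$. Applying (TC) to $s,v_i,w$ yields $z\in S_{k-1}(s)$ with $z\sim v_i,w$; the previous paragraph gives $z=v_j$ or $z\sim v_j$. Supposing $v_j\not\sim w$ for contradiction, I examine the BFS parent $p$ of $w$: the minimality of $v_j$'s BFS index, combined with the BFS property that $p$ is processed no later than $v_j$, forces either $p=v_j$ (so $w\sim v_j$, contradiction) or $p\not\sim v_i$. In the latter subcase Lemma~\ref{l:clique-in-the-sphere} applied to $\{w\}$ gives $p\sim z$. If additionally $v_j\sim p$, then $v_jv_iwp$ is an induced $4$-cycle (since $v_j\not\sim w$ and $v_i\not\sim p$), contradicting the characterization of bridged graphs as weakly modular graphs without induced $C_4$ or $C_5$. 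The residual subcase $v_j\not\sim p$ is the main obstacle; I expect to handle it by iterating (TC) and Lemma~\ref{l:clique-in-the-sphere} along the BFS parents of $z$ and $p$ back toward $s$ until a genuinely induced forbidden cycle is produced.

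For (2), the skeleton is identical with LexBFS replacing BFS and the convex-ball property of weakly bridged graphs standing in for the stronger bridged structure. Lemma~\ref{l:clique-in-the-sphere} still applies, so the easy case is unchanged. Since weakly bridged graphs may contain induced $C_5$, the $C_4$ argument above is no longer sufficient, and the lexicographic labels of LexBFS are used to provide the additional tiebreaking needed to rule out the remaining configurations. The main obstacle, again, is the hard case, now with the extra subtlety of translating LexBFS label comparisons into the required geometric constraints.
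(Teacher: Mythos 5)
This theorem is not proved in the paper at all: it is quoted from \cite{Ch_bridged} (part (1)) and \cite{ChOs} (part (2)), so the relevant comparison is with those original proofs, and against them your proposal has a genuine gap rather than a complete alternative argument. Your setup (nominate the BFS father $v_j$ of $v_i$ as the dominating vertex, split neighbors $w$ of $v_i$ in $G_i$ according to $w\in S_{k-1}(s)$ or $w\in S_k(s)$, and use Lemma~\ref{l:clique-in-the-sphere} plus (TC)) reproduces only the easy part of Chepoi's argument. The case you yourself flag as ``the residual subcase $v_j\not\sim p$'' is exactly where the real content of the theorem lies, and it cannot be closed by ``iterating (TC) and Lemma~\ref{l:clique-in-the-sphere} along the BFS parents back toward $s$'': the configuration you reach ($v_i,w\in S_k(s)$; $v_j,z,p\in S_{k-1}(s)$ with $z\sim v_i,w,p,v_j$ and $v_j\not\sim w$, $v_i\not\sim p$, $v_j\not\sim p$) is perfectly realizable inside a bridged graph and contains no induced $C_4$ or $C_5$ by itself. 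To rule it out one must use the BFS ordering globally, not just locally: the original proof descends toward the root using the quadrangle condition and the convexity of balls in bridged graphs, together with the fact that $w$ was discovered before $v_i$ (so the fathers compare in a controlled way), in an induction on the distance level. None of this is present, so part (1) is not proved.

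Part (2) is in worse shape. Saying ``the skeleton is identical with LexBFS replacing BFS'' hides the essential difficulty: for weakly bridged graphs the father-domination statement for plain BFS is simply false (this is precisely why \cite{ChOs} had to introduce LexBFS), so the argument cannot be a routine transcription of (1) with ``extra bookkeeping.'' The proof in \cite{ChOs} uses the lexicographic labels in an essential, quantitative way (comparing the label histories of $w$ and $v_i$ to extract earlier common neighbors) together with the local structure of weakly systolic complexes; your proposal only asserts that the labels ``provide the additional tiebreaking needed'' without exhibiting how a forbidden configuration is actually eliminated. As written, both halves stop exactly at the step that distinguishes these theorems from the easy clique-in-the-sphere observation, so the proposal should be regarded as an outline of the known strategy rather than a proof.
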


Since BFS and LexBFS order the vertices of $G$ by distances with respect to the basepoint $z=v_1$, % (and breaking ties arbitrarily as BFS or in a more precise way as LexBFS)
the last vertex $v_n$ of the order $v_1,\ldots,v_n$ is a dominated vertex of $G$ and is a furthest from $z$ vertex. Consequently, we have the following corollary of Theorem \ref{dismantling-bridged}: 

\begin{lemma}\label{lem:dominated_far_away}
    Let $G=(V,E)$ be a bridged or weakly bridged graph, $z\in V$, and 
    $k = \max_{x \in V}\{d(x,z)\}$.
    Then $S_k(z)$ contains a dominated vertex $u$. Furthermore, 
    the subgraph $G'$ of $G$ induced by $V\setminus \{ u\}$ is a bridged graph, which is an isometric subgraph of $G$. 
\end{lemma}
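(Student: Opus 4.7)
The plan is to derive both statements directly from Theorem \ref{dismantling-bridged}. Running BFS (in the bridged case) or LexBFS (in the weakly bridged case) starting from $z = v_1$ produces an ordering $v_1, \ldots, v_n$ which, by Theorem \ref{dismantling-bridged}, is a dismantling order of $G$. Because BFS and LexBFS both enumerate vertices in non-decreasing order of distance from the basepoint, and because $k = \max_{x \in V} d(x,z)$, the final vertex $v_n$ satisfies $d(v_n, z) = k$, hence $v_n \in S_k(z)$. By the definition of a dismantling order, $v_n$ is dominated in the whole graph $G_n = G$. So setting $u := v_n$ gives a dominated vertex of $G$ lying in $S_k(z)$.

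Next, I would verify that $G' := G[V \setminus \{u\}]$ is isometric in $G$. Let $v$ be a neighbor of $u$ with $B_1(u) \subseteq B_1(v)$. Given $x, y \in V \setminus \{u\}$, any shortest $xy$-path $P$ in $G$ is either already contained in $G'$, or contains $u$ as an internal vertex; in the latter case, the two neighbors of $u$ along $P$ lie in $B_1(u) \subseteq B_1(v)$, so substituting $v$ for $u$ yields a walk of the same length in $G'$. Hence $d_{G'}(x,y) \leq d_G(x,y)$, and the reverse inequality is automatic, proving that $G'$ is an isometric subgraph of $G$.

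For the conclusion that $G'$ is itself bridged (respectively weakly bridged), the bridged case is immediate from the characterization via isometric cycles: any isometric cycle $C$ in $G'$ of length at least $4$ would, by the isometric property just established, also be an isometric cycle in $G$, contradicting that $G$ is bridged. For the weakly bridged case, I would argue that the map $r \colon G \to G'$ sending $u \mapsto v$ and fixing every other vertex is a graph retraction (it is a homomorphism because $B_1(u) \subseteq B_1(v)$, and it fixes $G'$), so $G'$ is a retract of $G$; both weak modularity and convexity of balls are preserved by retracts, so $G'$ is weakly bridged.

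The main obstacle is the weakly bridged case: unlike the bridged case, one cannot simply invoke a single forbidden-subgraph condition stable under taking isometric subgraphs, so some care is needed to justify that weak modularity and convexity of balls persist in $G'$. The cleanest route is the retraction argument sketched above; an alternative would be to verify the triangle and quadrangle conditions directly by replacing any offending configuration in $G'$ by one involving $v$ in place of $u$ and using that $B_1(u) \subseteq B_1(v)$.
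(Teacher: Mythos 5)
Your proposal is correct and follows essentially the same route as the paper: invoke Theorem \ref{dismantling-bridged} so that the last vertex of a BFS/LexBFS order from $z$ is both furthest from $z$ and dominated, then note that deleting a dominated vertex leaves an isometric subgraph, which inherits bridgedness via the isometric-cycle characterization. Your extra care for the weakly bridged case (the retraction $u \mapsto v$ preserving weak modularity, convexity of balls passing to isometric subgraphs) goes slightly beyond the paper's one-line justification but is consistent with it and correct.
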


The second assertion of Lemma \ref{lem:dominated_far_away} follows from the fact that $G'$ is an isometric subgraph of $G$ (since $u$ is dominated) and any isometric subgraph of a bridged graph is also bridged. 

That Helly graphs are dismantlable was proved in \cite{BaPe}. The
proof of \cite{BaPe} is based on Lemma~\ref{l:Helly-BP} and the
following lemma is an analogue of Lemma \ref{lem:dominated_far_away}
for Helly graphs:

\begin{lemma}[\cite{BaPe}] \label{lem:dominated_far_away_Helly} Let
  $G=(V,E)$ be a Helly graph and $z\in V$.  Let
  $k \coloneq \max_{x \in V}\{d(x,z)\}$.  Then any vertex
  $u\in S_k(z)$ is a dominated vertex. Furthermore, the subgraph $G'$
  of $G$ induced by $V\setminus \{ u\}$ is an isometric subgraph of
  $G$ and $G'$ is a Helly graph. 
\end{lemma}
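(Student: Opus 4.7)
The plan is to invoke Lemma~\ref{l:Helly-BP} to exhibit a dominator for any vertex in $S_k(z)$, and then to deduce both the isometry and the Helly property of $G'$ from the domination relation together with the Helly property of $G$.

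First, I would dispose of the degenerate cases $k\le 1$: if $k=0$ then $V=\{z\}$, and if $k=1$ then $B_1(z)=V$ so that $z$ dominates every other vertex. Assume now $k\ge 2$ and fix $u\in S_k(z)$. Applying Lemma~\ref{l:Helly-BP} to the pair $(z,u)$ yields a vertex $v$ at distance $k-1$ from $z$ that is adjacent to $u$ and to every neighbor $w$ of $u$ with $d(z,w)\le k$. Since $k$ is the eccentricity of $z$, the condition $d(z,w)\le k$ is automatic for every $w\in N(u)$, so $v$ is adjacent to every vertex of $N(u)$; equivalently, $N[u]\subseteq N[v]$, which says that $v$ dominates $u$. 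Note also that $v\ne u$, since $d(v,z)=k-1\ne k$, so $v\in V\setminus\{u\}$.

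Next, I show that $G'$ is isometric in $G$. Given $a,b\in V\setminus\{u\}$, take any shortest $(a,b)$-path $P$ in $G$. If $P$ avoids $u$ it already lies in $G'$; otherwise $u$ appears as an internal vertex with predecessor $w_1$ and successor $w_2$ on $P$. Because $v$ dominates $u$, both $w_1$ and $w_2$ are adjacent to $v$, so replacing $u$ by $v$ in $P$ gives a walk of the same length lying in $G'$. Iterating if necessary, we conclude $d_{G'}(a,b)=d_G(a,b)$.

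Finally, to prove that $G'$ is Helly, let $(B_{r_i}^{G'}(x_i))_{i\in I}$ be a pairwise intersecting family of balls of $G'$. Since $G'$ is isometric, $B_{r_i}^{G'}(x_i)=B_{r_i}^G(x_i)\setminus\{u\}$, so the associated family of balls in $G$ is also pairwise intersecting; by Helly-ness of $G$ it has a common point $y$. If $y\ne u$ we are done. The essential case, and the one I expect to be the main (but short) obstacle, is $y=u$: then $d(x_i,u)\le r_i$ for every $i$, and I claim the dominator $v$ lies in each ball. Indeed, for each $i$ pick a shortest $(x_i,u)$-path and let $w$ be the penultimate vertex; since $v\sim w$, we get $d(x_i,v)\le d(x_i,w)+1=d(x_i,u)\le r_i$. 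As $v\ne u$, this gives $v\in\bigcap_{i\in I}B_{r_i}^{G'}(x_i)$, establishing the Helly property of $G'$.
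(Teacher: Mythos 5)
Your proposal is correct and follows essentially the same route as the paper: domination of any farthest vertex $u$ comes from Lemma~\ref{l:Helly-BP} applied to $z$ and $u$ (the hypothesis $d(z,w)\le k$ being automatic since $k$ is the eccentricity of $z$), and the isometry and Helly property of $G'$ both follow from the observation that the dominator $v$ satisfies $d(x,v)\le d(x,u)$ for all $x$, so $v$ can replace $u$ on shortest paths and in any intersection of balls containing $u$. Your treatment is just a more detailed write-up of the paper's argument, including the harmless degenerate cases $k\le 1$.
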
 

The first assertion of Lemma \ref{lem:dominated_far_away_Helly} follows by applying Lemma \ref{l:Helly-BP} to $z$ and any vertex $u$ furthest from $z$. The second assertion of Lemma \ref{lem:dominated_far_away_Helly} follows from the simple observation that since $v$ dominates $u$, $d(v,x)\le d(u,x)$, thus  any set of balls intersecting at $u$ also contains $v$ in the intersection.

\section{Boundary rigidity of systolic complexes}

The goal of this section is to prove Theorem
\ref{thm:boundary_rigidity_systolic}. 
 Let $X$ be a finite systolic
complex and $G=(V,E)$ be a bridged graph, which is the 1-skeleton of
$X$. Let $\partial X$ be the boundary of $X$ and $D$ be the distance
matrix of pairwise distances in $G$ between the vertices of
$\partial X$. Let $z$ be an arbitrary fixed vertex of $\partial X$. We
proceed by induction on the number of vertices of $X$. In fact, we
will prove a slightly more general inductive hypothesis: \emph{if
  $X,Y$ are two finite systolic complexes and $S(X)$ is a subset of
  vertices of $X$ such that $\partial X\subseteq S(X),$
  $\partial Y\subseteq S(X)\subseteq V(Y)$, and $D$ is the distance
  matrix of $S(X)$ in $G(X)$ and $G(Y)$, then $X=Y$.}

If $X$ contains a single vertex, then both $X$ and $\partial X$ consist only of a single 0-simplex $\{ z\}$. Therefore, suppose that $X$ contains at least two vertices. Applying Lemma \ref{lem:dominated_far_away} to $z\in \partial X$, we conclude that $G=G(X)$ contains a dominated vertex $u$ different from $z$. By Lemma \ref{lem:dominated_neighborhood}, $u\in \partial X$. Suppose that $u$ is dominated in $G$ by a vertex $v$. Let $X'$ be the simplicial complex consisting of all simplices 
of $X$ not containing the vertex $u$ and let $G'$ be the subgraph of $G$ induced by $V\setminus \{ u\}$. Then $G'$ is the 1-skeleton of $X'$. Since $u$ is dominated by $v$, $G'$ is an isometric subgraph of $G$. Since $G$ is bridged, $G'$ is also bridged. Thus $X'$ is the clique complex of 
$G'$, whence $X'$ is systolic. 

Next, from $S(X)$ and the distance matrix $D$ of $S(X)$ in $G$, we define a set $S(X')$ such that $\partial X'\subseteq S(X')\subseteq V'=V\setminus \{ u\}$ and we show how to compute the distance matrix of $S(X')$ in $G$ (and thus in $G'$). We distinguish two cases.

\begin{case} There exists a vertex $v'\in S(X), v'\sim u$ such that $N[u]\cap S(X)\subseteq N[v']$.
\end{case}

We assert that in this case $v'$ dominates $u$ in $G$, i.e., that $N[u]\subseteq N[v']$. If $v'=v$, then we are done because $v$ dominates $u$ in $G$. Now suppose that $v'\ne v$. Pick any $w\in N[u]$. If $w=v$, then $w\sim v'$ because $v$ dominates $u$ in $G$. If $w\ne v$, then $w\in \partial X\subseteq S(X)$ by Lemma \ref{lem:dominated_neighborhood}. Since $N[u]\cap S(X)\subseteq N[v']$, $w\sim v'$ also in this case, and we are done. In this second case, $u$ is dominated by $v$ and $v'$. Consequently, by the second assertion of Lemma \ref{lem:dominated_neighborhood}, $v\in \partial X\subseteq S(X)$. Therefore, in both cases ($v'=v$ or $v'\ne v$) we can set $S(X')=S(X)\setminus \{ u\}$. The distance matrix $D'$ of $S(X')$ is obtained from the distance matrix $D$ of $S(X)$ by removing the row and the column of $u$.

\begin{case} There does not exist a vertex $v'\in S(X), v'\sim u$ such that $N[u]\cap S(X)\subseteq N[v']$.
\end{case}

In this case, we set $X' = X\setminus\{u\}$, and $S(X') = S(X)\setminus\{u\}\cup\{v\}$. To derive the distance matrix $D'$ of $S(X')$ from the distance matrix $D$ of $S(X)$, we need to compute the distances $d(v,x)$ from $v$ to all vertices $x$ of $S(X')$. Pick any $x\in S(X')$. If $x\in N(u)$, then $d(v,x) = 1$ since $v$ dominates $u$.
Otherwise, let $k=d(x,u)$. If there exists $w \in S(X')\cap N(u)$ with $d(x,w) = k+1$, then we can set $d(x,v) = k$. Indeed, $u$ belongs to a shortest $(w,x)$-path $P$ of $G$. If $y$ is the neighbor of $u$ in $P$, then $v\sim w,y$  because $v$ dominates $u$ in $G$. Consequently, $v$ is on a shortest $(w,x)$-path, whence $d(v,x)=k$. Notice also that since 
$w \in S(X')\cap N(u)$, $w$ also belongs to $S(X)$. Thus, the entry $d(v,x)$ of $D'$ is derived from the entry $d(w,x)$ of $D$. 

Now suppose that $d(w,x)\le k$ for every vertex $w\in S(X')\cap N(u)$. 
We assert that in this case $d(v,x)=k-1$. Indeed, suppose it is not the case. Then necessarily, we must have  $d(x,v) = k$.
Let $K$ be a maximal simplex of the sphere $S_k(x)$ containing $u$ and $v$.
Consider a maximal simplex $K'$ of $X$ containing $K$. Both $K$ and $K'$ are cliques of $G$.  Since $u$ is a local maxima of the distance function $d(x,\cdot)$ and $K$ is a maximal clique of $S_k(x)$, we get $K'\setminus K \subseteq S_{k-1}(x)$. On the other hand, by Lemma \ref{l:clique-in-the-sphere}, $K'\setminus K\ne\varnothing$. We claim that $K$ is contained in $\partial X\subseteq S(X)$.  Indeed, let $K'' = K' \setminus \{y\}$ for some $y \in K' \setminus K$ ($y$ exists since $K'\setminus K\ne \varnothing$).
If there was some simplex $K'''$ different from $K'$ and strictly containing $K''$, then there would exist some $t$ adjacent in $G$ to all vertices of $K''$ and not belonging to $K'\setminus K$. By the maximality choice of $K$ in $S_k(x)$, such $t$ would have to belong to the sphere $S_{k-1}(x)$. So $t$ would be adjacent to $y$, and hence to all vertices of $K'$, a contradiction with the maximality choice of $K'$. This contradiction shows that $v\in K\subseteq \partial X\subseteq S(X)$. 
Hence, $v$ would belong to $\partial X$, which is impossible. This proves that $d(v,x)=k-1$ and thus that the entry $d(v,x)$ of $D'$ can be derived from the entry $d(u,x)$ of $D$ and the conditions of Case 2. 
\begin{claim}\label{claim-boundary-S(X)}
    In both Cases 1 and 2, $\partial X' \subseteq S(X')$.
\end{claim}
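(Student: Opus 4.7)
The plan is to take an arbitrary vertex $w$ of $\partial X'$ and show $w\in S(X')$, by splitting according to whether $w$ is already a boundary vertex of $X$ or is a ``new'' boundary vertex created by the removal of $u$. By the definition of the combinatorial boundary, there is a simplex $\sigma$ of $X'$ with $w\in\sigma$ such that $\sigma$ is a facet of a unique simplex of $X'$. In particular $u\notin\sigma$, so $w\neq u$.

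Next I would compare the facet-parents of $\sigma$ in $X$ and in $X'$. The facet-parents of $\sigma$ in $X'$ are exactly those facet-parents of $\sigma$ in $X$ that avoid $u$, and the only candidate facet-parent of $\sigma$ in $X$ that contains $u$ is $\sigma\cup\{u\}$. This yields a clean dichotomy. If $\sigma\cup\{u\}$ is not a simplex of $X$, then $\sigma$ already has a unique facet-parent in $X$, so $w\in\partial X\subseteq S(X)$; combined with $w\neq u$ this gives $w\in S(X')$ in both Cases 1 and 2.

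The remaining alternative is that $\sigma\cup\{u\}$ is a simplex of $X$. Then every vertex of $\sigma$, and in particular $w$, is adjacent to $u$, so $w\in N(u)$. Since $u$ is dominated by $v$, Lemma \ref{lem:dominated_neighborhood} gives $N[u]\setminus\{v\}\subseteq\partial X$. If $w\neq v$, then $w\in\partial X\subseteq S(X)$, hence $w\in S(X')$. If $w=v$, then in Case 2 of the construction we have $v\in S(X')$ by definition. In Case 1, the opening argument already establishes $v\in S(X)$: either $v'=v$, in which case $v\in S(X)$ directly; or $v'\neq v$ and $u$ is dominated by the two distinct vertices $v$ and $v'$, so the second assertion of Lemma \ref{lem:dominated_neighborhood} puts all of $N[u]$, and hence $v$, into $\partial X\subseteq S(X)$. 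Since $v\neq u$, we conclude $w=v\in S(X')$.

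The main obstacle is really the combinatorial bookkeeping: recognizing that a simplex $\sigma\subseteq X'$ can become a facet of a unique simplex in $X'$ without already being so in $X$ only through the disappearance of the candidate facet-parent $\sigma\cup\{u\}$, which forces $\sigma\subseteq N(u)$. Once this observation is in place, Lemma \ref{lem:dominated_neighborhood} together with the specific constructions of $S(X')$ in Cases 1 and 2 completes the argument with no further ingredients.
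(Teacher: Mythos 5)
Your proof is correct and follows essentially the same route as the paper's: the paper argues by contradiction (a vertex of $\partial X'\setminus S(X')$ would lie outside $\partial X$, forcing its unique-parent simplex to gain a second parent $K\cup\{u\}$ in $X$, hence the vertex is adjacent to $u$ and must equal $v$ by Lemma \ref{lem:dominated_neighborhood}, which is impossible since $v\in S(X')$ in both cases), while you run the same observation directly via the dichotomy on whether $\sigma\cup\{u\}$ is a simplex of $X$. The key ingredients --- the fact that removing $u$ can only create new boundary simplices inside $N(u)$, Lemma \ref{lem:dominated_neighborhood}, and the case-specific facts that $v\in S(X)$ in Case 1 and $v\in S(X')$ in Case 2 --- are identical.
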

\begin{proof}
By induction hypothesis, $\partial X\subseteq S(X)$. By definition of $S(X')$, $S(X')=S(X)\setminus \{ u\}$ in Case 1 and $S(X')=S(X)\setminus \{ u\}\cup \{ v\}$ in Case 2. Suppose that there exists $x \in \partial X' \setminus S(X')$. Then there is a non-maximal simplex $K$ of $X'$ containing $x$ that is a facet of a unique simplex $K'$ of $X'$. Then necessarily $K'$ is a maximal clique of $G'$. 
On the other hand, since $x\not\in S(X')$ and $x\in X'$, from the relationships between $S(X')$ and $S(X)$ we get $x \not\in S(X)$. Since $\partial X\subseteq S(X)$, $x\notin \partial X$. 
So the simplex $K$ is contained in a simplex $K''$ of $X$ different from $K'$. Since the simplex $K''$ is not in $X'$, $K''$ has to contain the vertex $u$. Consequently, $x\sim u$. Since $x\notin \partial X$, 
by Lemma \ref{lem:dominated_neighborhood}, $x=v$ and $v\in S(X')\setminus S(X)$. But this contradicts the choice of $x\in \partial X'\setminus S(X')$. This establishes the inclusion $\partial X'\subseteq S(X').$
\end{proof}

From the analysis of Cases 1 and 2 and Claim \ref{claim-boundary-S(X)} we deduce that $\partial X'\subseteq S(X')$ and that $D'$ is the distance matrix of $S(X')$. By the inductive hypothesis, if $Y'$ is a systolic complex such that $\partial Y'\subseteq S(X')\subseteq Y'$ and the distance matrix of $S(X')$ computed in the bridged graph $G(Y')$ coincides with $D'$, then $Y'$ is isometric to $X'$ (under an isometry fixing $S(X')$). Let $G'=G(X')$. From  $G'$ and the distance matrix $D$ of $S(X)$ we reconstruct $G$ by adding the vertex $u$ and all edges $ux$ for all vertices of $G'$ such that $d(v,x)=1$ in $D$. Then $X$ is the clique complex of $G$. Alternatively, $X$ is obtained from $X'$ by adding all cliques of $G$ containing the added vertex $u$. 
Since $u$ is dominated by $v$, by Lemma \ref{lem:dominated_neighborhood} we have that $N[u]\subseteq \partial X \cup \{v\}\subseteq S(X')$, thus all neighbors of $u$ are correctly retrieved. Therefore we conclude that $X$ is uniquely determined by $D$. This concludes the proof of Theorem \ref{thm:boundary_rigidity_systolic}.

The proof of Theorem \ref{thm:boundary_rigidity_systolic} generalizes to weakly systolic complexes.
The only change in the proof is using Theorem \ref{dismantling-bridged}(2) instead of \ref{dismantling-bridged}(1).

\begin{corollary}\label{cor:weakly}
    Let $X$ be a finite weakly systolic complex. Then $X$ is determined up to isomorphism by the combinatorial distances of points of $\partial X$ in $X$. Hence, the class of finite weakly systolic complexes is boundary rigid. 
\end{corollary}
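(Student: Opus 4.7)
The plan is to repeat the inductive argument of Theorem~\ref{thm:boundary_rigidity_systolic} verbatim, with ``weakly systolic'' and ``weakly bridged'' replacing ``systolic'' and ``bridged'' throughout. I would prove by induction on $|V(X)|$ the same strengthened statement: for any two finite weakly systolic complexes $X,Y$ and any set $S(X)\subseteq V(X)$ with $\partial X\subseteq S(X)$ and $\partial Y\subseteq S(X)\subseteq V(Y)$, if the distance matrix of $S(X)$ computed in $G(X)$ agrees with the one computed in $G(Y)$, then $X=Y$. The base case of a single vertex is immediate.

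For the inductive step, I would fix $z\in\partial X$ and, in place of Theorem~\ref{dismantling-bridged}(1), invoke Theorem~\ref{dismantling-bridged}(2): a LexBFS-order of $G=G(X)$ starting at $z$ is a dismantling order of the weakly bridged graph $G$, so there exists a vertex $u\in S_k(z)$ dominated by some $v\in V(G)$, where $k=\max_{x\in V}d(z,x)$. Lemma~\ref{lem:dominated_neighborhood} then places $u\in\partial X\subseteq S(X)$. Setting $X'=X\setminus\{u\}$ and $G'=G[V\setminus\{u\}]$, I would use the weakly bridged analogue of the last assertion of Lemma~\ref{lem:dominated_far_away} to conclude that $G'$ is an isometric subgraph of $G$ and is itself weakly bridged, so that $X'$ is weakly systolic.

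The rest of the argument then transfers without change. The division into Case~1 and Case~2, the definitions of $S(X')$ and the derived distance matrix $D'$, and Claim~\ref{claim-boundary-S(X)} all carry over, because the only nontrivial ingredients they rely on are Lemma~\ref{l:clique-in-the-sphere}, Lemma~\ref{l:sphere-in-boundary}, and Lemma~\ref{lem:dominated_neighborhood}, each of which is already formulated in the weakly bridged or in the general simplicial setting. In particular, the Case~2 subargument that forces $d(v,x)=k-1$ when no $w\in N(u)\cap S(X')$ satisfies $d(x,w)=k+1$ uses only Lemma~\ref{l:clique-in-the-sphere}, which applies to weakly bridged graphs. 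The inductive hypothesis applied to $(X',S(X'),D')$ reconstructs $X'$; adjoining $u$ together with the edges $ux$ for $x\in V(G')$ with $d(u,x)=1$ (which can be read off from $D$ since $N[u]\subseteq\partial X\cup\{v\}\subseteq S(X)$) and taking the clique complex then recovers $X$.

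The main point to verify is the hereditary property that deleting a dominated vertex from a weakly bridged graph leaves a weakly bridged graph, i.e.\ the weakly bridged form of the last sentence of Lemma~\ref{lem:dominated_far_away}. This is the only place where the transfer requires slightly more than what the systolic proof invoked, and is what the authors allude to when pointing to Theorem~\ref{dismantling-bridged}(2). Granting it, every step of the original induction goes through with no other modification, and the corollary follows.
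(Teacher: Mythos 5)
Your proposal is correct and matches the paper's intent exactly: the paper proves Corollary~\ref{cor:weakly} by the one-line observation that the proof of Theorem~\ref{thm:boundary_rigidity_systolic} goes through verbatim once Theorem~\ref{dismantling-bridged}(2) (LexBFS dismantling of weakly bridged graphs) replaces Theorem~\ref{dismantling-bridged}(1), with all the auxiliary lemmas (Lemmas~\ref{l:clique-in-the-sphere}, \ref{l:sphere-in-boundary}, \ref{lem:dominated_neighborhood}, \ref{lem:dominated_far_away}) already stated for the weakly bridged or general simplicial setting. Your explicit flagging of the hereditary step (that removing a dominated vertex leaves an isometric, hence again weakly bridged, subgraph) is a fair point of care but is exactly what the paper's cited lemmas are meant to cover, so there is no genuine divergence.
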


The proof of Theorem \ref{thm:boundary_rigidity_systolic} does not
provide a straightforward algorithm to reconstruct $X$, as one needs
to start by picking a dominated vertex.  However, from our proof it
follows that a systolic complex $X$ has at most
$\sum_{u,u'\in \partial X} d(u,u')$ vertices. Indeed, at each
reconstruction step the distance sum $\sum_{u,u'\in S(X)} d(u,u')$
decreases by at least 1. Consequently, we know that the number of
vertices of $X$ is upper bounded by the sum of the entries of the
initial matrix $D$. We can thus reconstruct all possible systolic
complexes of at most that size.  Then by the existence and uniqueness,
among these complexes, there is one (and only one) complex that has
the same boundary and the same boundary distance matrix as $X$, and we
can return it. This is of course not an efficient algorithm, but it
shows that finite systolic complexes can be reconstructed
algorithmically from their boundary and boundary distance matrix.
In Section \ref{sec:algorithm} we show how to efficiently find a
dominated vertex at each step when the systolic complex is
2-dimensional.  However we do not know how to generalize this approach
for systolic complexes of higher dimension.

\section{Boundary rigidity of Helly complexes} 

The goal of this section is to prove
Theorem~\ref{thm:boundary_rigidity_helly}. Our proof is constructive
and we derive a polynomial time algorithm from it and analyze its
complexity.

\subsection{Proof of Theorem~\ref{thm:boundary_rigidity_helly}}

The proof for Helly complexes goes along the same lines as the proof
of Theorem \ref{thm:boundary_rigidity_systolic}, but we use Lemma
\ref{lem:dominated_far_away_Helly} instead of Lemma
\ref{lem:dominated_far_away} and Lemma \ref{l:Helly-BP} instead of
Lemma \ref{l:clique-in-the-sphere}. Let $X$ be a finite Helly complex
and let $G=(V,E)$ be a Helly graph, which is the 1-skeleton of $X$. As
with systolic complexes, let $S(X)$ be any subset of vertices
containing $\partial X$ and let $D$ be the distance matrix of $S(X)$
in $G$. We proceed by induction on the number of vertices of $X$ and
prove the same inductive hypothesis as for systolic complexes. The
base case where $X$ contains a single vertex, is also the same.

Let $z$ be an arbitrary fixed vertex of $\partial X$ and suppose that $X$ contains at least two vertices. Applying Lemma \ref{lem:dominated_far_away_Helly} to $z\in \partial X$, we conclude that any furthest from $z$ vertex $u$ is a dominated vertex different from $z$. By Lemma \ref{lem:dominated_neighborhood}, $u\in \partial X$.  Suppose that $u$ is dominated in $G$ by a vertex $v$. Let $X'$ be the simplicial complex consisting of all simplices 
of $X$ not containing the vertex $u$ and let $G'$ be the subgraph of $G$ induced by $V\setminus \{ u\}$. Then $G'$ is the 1-skeleton of $X'$. Since $u$ is dominated by $v$, $G'$ is an isometric subgraph of $G$. By the second assertion of Lemma \ref{lem:dominated_far_away_Helly},  $G'$ is Helly. Thus $X'$ is the clique complex of  $G'$, whence $X'$ is Helly. 

Next, from $S(X)$ and the distance matrix $D$ of $S(X)$ in $G$, we define a set $S(X')$ such that $\partial X'\subseteq S(X')\subseteq V'=V\setminus \{ u\}$ and we show how to compute the distance matrix of $S(X')$ in $G$ (and thus in $G'$). We distinguish the same two cases. The analysis of Case 1 is completely the same because it uses only Lemma \ref{lem:dominated_neighborhood} which holds for all dismantlable graphs. 
Now we consider Case 2, i.e., the case where there does not exist a vertex $v'\in S(X)$, $v'\sim u$ such that $N[u]\cap S(X)\subseteq N[v']$. 
In this case, we also set $X' = X\setminus\{u\}$, and $S(X') = S(X)\setminus\{u\}\cup\{v\}$. Pick any $x\in S(X')$ and we want to derive 
the distance $d(v,x)$.  If $x\in N(u)$, then $d(v,x) = 1$ since $v$ dominates $u$. Otherwise, let $k=d(x,u) \geq 2$. If there exists $w \in S(X')\cap N(u)$ with $d(x,w) = k+1$, then as in case of systolic complexes we can argue that $d(x,v) = k$. Now suppose that $d(w,x)\le k$ for every vertex $w\in S(X')\cap N(u)$. We assert that in this case $d(v,x)=k-1$. 
Applying Lemma \ref{l:Helly-BP} to $x$ and $u$, there exists a 
vertex $v'\sim u$ at distance $k-1$ from $x$ and adjacent to all 
vertices $w\in N[u]\cap B_k(x)$. Since any vertex  $w\in N[u]$ has distance at most $k$ from $x$, we conclude that $v'$ dominates $u$ 
in $G$. Since $v$ also dominates $u$ and $v\notin S(X)$, 
necessarily $v'=v$, yielding $d(v,x)=k-1$. This allows to 
compute the distance matrix $D'$ of $S(X')$. Finally, the proof 
of Claim \ref{claim-boundary-S(X)} and the final application of the induction are the same as for systolic complexes. This establishes 
Theorem \ref{thm:boundary_rigidity_helly}. 

\subsection{The complexity of the reconstruction}

In contrast to systolic complexes, at each step of the reconstruction, a dominated vertex $u$ of a Helly complex $X$ can be easily computed. Thus, the proof of Theorem \ref{thm:boundary_rigidity_helly} gives an algorithm to reconstruct $X$ in polynomial time in the number of its
vertices. We now give an upper bound on its complexity.

During the execution of the algorithm, we have the subgraph $G'$ of
$G$ induced by all vertices discovered so far (at the end, $G'$
coincides with the 1-skeleton $G$ of $X$), a set of vertices $S(X)$
containing the vertices of the boundary of the current complex (and potentially other vertices), the
distance matrix $D$ of $S(X)$ (all distances are computed in $G$), and
the subgraph $G[S(X)]$ of $G$ induced by $S(X)$. Initially, we are
given $S(X) = \partial X$ and the distance matrix $D$ of $\partial
X$. Let $n_0=|\partial X|, n=|V(X)|,$ and $m=|E(X)|$ (only $n_0$ is
initially known to the algorithm).  From $D$, we can compute in time
$O(n_0^2)$ the graphs $G' = G[S(X)]$ by taking all pairs $x,y\in S(X)$
with $d(x,y)=1$ as edges.  At each iteration, we update $S(X)$, $D$,
$G[S(X)]$, and $G'$.

At each step we have $|S(X')| = |S(X)|-1$ or $|S(X')| =
|S(X)|$, hence $|S(X)| \leq n_0$ at all iterations. Initially we sort the vertices of $\partial X$ into a list $L$ ordered by
decreasing distances to $z$; this can be done in $O(n_0)$ time using bucket sort. At
each step, we can then find a vertex $u$ at maximum distance from $z$
in constant time. To check whether $u$ is dominated in $G[S(X)]$, for
each neighbor $v$ of $u$, we check if $|N(u) \cap N(v)| = |N(u)|-1$;
this can be done in $O(|N(u)| +|N(v)|)$ time assuming that there is a
global order on the vertices and that the adjacency list of each vertex
is initially sorted according to this order. We are considering the
neighborhood in $G[S(X)]$, and thus
$|N(u)|+|N(v)| = O(|S(X)|) = O(n_0)$. Consequently, one can
check in $O(n_0\cdot \deg(u))$ time if
$S(X') = S(X) \setminus \{u\}$ or if
$S(X') = S(X) \cup \{v\} \setminus \{u\}$. Note that we can insert $v$
in $L$ in constant time if we keep for each $0 \leq k \leq n_0$ a
pointer to the first element $w$ of the list $L$ such that
$d(z,w) = k$. We can thus assume that at all steps, $L$ contains the
list of vertices of $S(X)$ ordered by their distance to $z$.

At each step where $S(X') = S(X) \setminus \{u\} \cup \{v\}$, we
should compute $d(x,v)$ for any vertex $x \in S(X) \setminus
\{u\}$. We know that either $d(x,v) = d(x,u)$ or $d(x,v)=d(x,u)-1$,
and that we are in the second case if and only if
$d(x,u) = \max \{d(x,w): w \in S(X) \text{ and } w \sim
u\}$. Consequently, for every vertex $w$ in $S(X)$, we can compute
$d(x,v)$ in $O(\deg(u))$ time and we can thus update the distance
matrix $D$ as well as the graphs $G'$ and $G[S(X)]$ in
$O(n_0\cdot \deg(u))$ time. Therefore, for any Helly complex $X$,
starting from the boundary $\partial X$ of size $n_0$ and its distance
matrix, one can reconstruct $X$ in $O(m\cdot n_0)$ time.

\section{Reconstruction of 2-dimensional systolic complexes}\label{sec:algorithm}

In this section we prove Theorem \ref{thm:boundary_rigidity_systolic-2dim} for 2-dimensional systolic complexes. The key goal is to algorithmically find a dominated vertex in $X$. Analogously to Helly graphs, every dominated vertex of a bridged graph $G=G(X)$ belongs to the boundary of $X$. But unlike Helly graphs, not every furthest from $z$ vertex of $G$ is a dominated vertex. In case of $K_4$-free bridged graphs (a.k.a. 2-dimensional systolic complexes) we find a dominated vertex $u$ in two stages. In the proofs, we will use a consequence of Lemma \ref{l:clique-in-the-sphere} that a sphere $S_k(z)$ of a 2-dimensional systolic complex $X$ cannot contain a triangle; in other words, $X$ cannot contain a triangle \emph{equidistant} to some vertex $z$. 

Note that there exist 2-dimensional systolic complexes with $n$ vertices and $O(n^{\frac{4}{3}})$ edges. For this, consider the construction of Benson \cite{Be} and Singleton \cite{Si} of a regular bipartite graph $H$ with girth 8, degree $q$, and $2(q^3 + q^2 + q+1)$ vertices. Let $G$ be a graph obtained by adding to $H$ a new vertex adjacent to all vertices of $H$. Then $G$ is bridged, $K_4$-free, and will contain 
$n:=2(q^3 + q^2 + q+1)+1$ vertices and $2(q^4 + 2q^3 +2q^2+2q+1)=O(n^{\frac{4}{3}})$ edges. Other examples of 2-dimensional systolic complexes with a non-linear number of edges can be obtained by considering finite simply connected subcomplexes of the Weetman's graph $W(H)$, where $H$ is the Benson-Singleton graph or other similar graph.  

\subsection{The computation of a dominated vertex}
Let $X$ be a 2-dimensional systolic complex and let $G=G(X)$ be its
1-skeleton. We start with $z_1\in \partial X$ and let
$k_1 = \max_{v \in \partial X}\{d(z_1,v)\}$. By Lemma
\ref{l:sphere-in-boundary}, all vertices of the sphere $S_{k_1}(z_1)$
belong to $\partial X$. Let $S_1$ be a connected component of the
subgraph $G_1$ of $G$ induced by the sphere $S_{k_1}(z_1)$. Since
$S_{k_1}(z_1)\subseteq \partial X$, $G_1$ can be constructed from
$S_{k_1}(z_1)$ and the distance matrix $D$ of $\partial X$. If $S_1$
contains a unique vertex $u$, then $N(u) \subseteq B_{k_1-1}(z_1)$ and
thus, by Lemma~\ref{l:clique-in-the-sphere}, $N(u)$ is a complete graph and $u$ is dominated (by any of its neighbors).

Suppose now that $|S_1| \geq 2$.  Pick any $z_2 \in S_1$ and consider
a vertex $u \in S_1$ such that $u$ has a neighbor $v$ in $S_1$ with
$d(z_2,v) = d(z_2,u)-1$ and $u$ has no neighbor $w$ in $S_1$ with
$d(z_2,w) = d(z_2,u)+1$. Observe that we can find such a vertex $u$
among the vertices at maximum distance from $z_2$ in the connected
component $S_1$.
We want to show that $u$ is a dominated vertex. Let $k_2 = d(z_2,u)$.
For that we need to consider two cases depending on whether $u$ has neighbors in $S_1$ at distance $k_2$ from $z_2$ or not. 

\begin{mycase}
  For every $w \in S_1 \cap N(u)$, $d(z_2,w) = d(z_2,u) -1 = k_2-1$.
\end{mycase}

We claim that in this case, $v$ is the only neighbor of $u$ in $S_1$.
Indeed, suppose that there exists $w \in N(u) \setminus \{v\}$ such
that $d(z_1,w) = k_1$. Since $d(z_2,w) = k_2 -1$, by
Lemma~\ref{l:clique-in-the-sphere}, we get a triangle $uvw$
equidistant from $z_1$, a contradiction.

By the triangle condition (TC) there exists a vertex $x$ with
$x \sim v,u$ and $d(z_1,x) = k_1-1$. We assert  that $x$ dominates
$u$. Indeed, for any $w \sim u$ different from $v$, we know that
$d(z_1,w) = k_1 -1$ and thus $x \sim w$ by
Lemma~\ref{l:clique-in-the-sphere}.

\begin{mycase}\label{mycase-2}
  There exists $w \in N(u) \cap S_1$ such that $d(z_2,w) = d(z_2,u) = k_2$. 
\end{mycase}

For any $w \in N(u) \cap S_1$ such that $d(z_2,w) = k_2$, by (TC),
there is a vertex $x \sim u, w$ such that $d(z_2,x) = k_2-1$. Note
that $d(z_1,x) = k_1 -1$ as otherwise $uxw$ is a triangle equidistant
from $z_1$, which is impossible. This implies in particular that
$x \neq v$.

Observe also that $v$ and $x$ are the unique neighbors of $u$ at distance
$k_2-1$ from $z_2$, as otherwise, by
Lemma~\ref{l:clique-in-the-sphere}, there would be a triangle at
distance $k_2-1$ from $z_2$. Consequently, $x$ is adjacent to all
vertices $w \in N(u) \cap S_1$ such that $d(z_2,w) = k_2$.  Note
also that for any $y \in N(u)$ such that $d(z_1,y) = k_1-1$ or
$d(z_2,y) = k_2-1$, we also have $x \sim y$ by
Lemma~\ref{l:clique-in-the-sphere} with respect to $z_1$ or
$z_2$. Consequently, $x$ is adjacent to all neighbors of $u$ and thus
$u$ is dominated by $x$.

\medskip

We have thus shown how to efficiently compute a dominated vertex. Given such a vertex, we can iteratively apply the process described in the proof of Theorem \ref{thm:boundary_rigidity_systolic} to reconstruct $X$ in polynomial time.  

\subsection{The complexity of the reconstruction}

As in the case of Helly complexes, during the execution of the
algorithm, we have a graph $G'$ containing all vertices discovered so
far, a set of vertices $S(X)$ containing the vertices of the boundary
of the current complex (and potentially other vertices), the subgraph
$G[S(X)]$ of $G$ induced by $S(X)$, and a distance matrix $D$
containing the distances between the vertices of $S(X)$.  At each step
of the reconstruction, once we have found a dominated vertex $u$, we
can perform exactly the same steps in the same time complexity as in
the case of Helly complexes. Consequently, we now focus on the time
complexity of the identification of a dominated vertex at each
step. As before, let $n_0=|\partial X|, n=|V(X)|,$ and $m=|E(X)|$.

We fix a vertex $z_1$ that remains the same during the whole execution
of the algorithm. As in the case of Helly complexes, we maintain a
list containing the vertices of $S(X)$ sorted by decreasing distance
to $z_1$.
Initially, we pick a vertex $z_2$ at maximum distance $k_1$ from $z_1$
and we start executing a Breadth-First-Search traversal from $z_2$
considering only the vertices at distance $k_1$ from $z_1$ (that are
all contained in $S(X)$). For each vertex $u$ considered during this
traversal, we store the number $c(u)$ of neighbors $w$ of $u$ such
that $d(z_1,w) = d(z_1, u) = k_1$ and $d(z_2,w) = d(z_2,u)+1$. Note
that we process each vertex $u$ in $O(\deg(u))$ time.  When we find a
vertex $u$ with $c(u) = 0$, $u$ satisfies the conditions to be the
vertex chosen in the previous subsection and we thus know that $u$ is
a dominated vertex. We can then construct $S(X')$ and update $D$,
$G[S(X)]$ and $G'$ as in the case of Helly complexes. Once we have
done this, for each neighbor $w$ of $u$ such that
$d(z_1,w) = d(z_1, u) = k_1$ and $d(z_2,w) = d(z_2,u)-1$, we decrease
$c(w)$ by $1$. Note that such a vertex $w$ has been considered before
$u$ in the Breadth-First-Search traversal of $G$ and that once $u$ has
been removed from $S(X)$, $w$ has one less neighbor at distance $k_1$
from $z_1$ and further away from $z_2$. Moreover, note that in the
case where $S(X') = S(X) \setminus \{u\} \cup \{v\}$, the vertex $v$
is at distance $k_1 - 1 = d(z_1,u) -1$ from $z_1$ and thus adding $v$
to $S(X)$ does not impact the value of $c(w)$ for any $w$ at distance
$k_1$ from $z_1$.  As long as there exists a vertex $u$ with
$c(u) = 0$, we reiterate this process. If there is no such vertex $u$,
we continue the Breadth-First-Search traversal rooted at $z_2$ from
where we stopped before. If the vertex $u$ that has been removed is
equal to $z_2$, then we pick as $z_2$ another vertex $z'_2$ at maximum
distance from $z_1$ in $S(X)$. The algorithm ends once we have
processed $z_1$.

With this algorithm, each vertex $u$ is considered in a
Breadth-First-Search traversal only once all along the execution. This
takes only $O(\deg(u))$ time for each vertex $u$. Consequently, the
total cost to identify a dismantlable vertex at each step of the
execution is $O(\sum_{u \in V(X)} \deg(u)) = O(m)$ overall. Therefore,
the complexity of our reconstruction algorithm is dominated by the
cost of the updates of $S(X)$, $D$, $G[S(X)]$, and $G'$: for any
2-dimensional systolic complex $X$ with $m$ edges, starting from its
combinatorial boundary $\partial X$ of size $n_0$, we can reconstruct
$X$ in $O(n_0 \cdot m)$ time.

\section{Final remarks}

In this article, we proved that finite systolic and Helly complexes  
are boundary rigid.  This generalizes the results of~\cite{Ha}, provides a partial answer to Question 2 of~\cite{HaScTaTa}, and  confirms a question 
of~\cite{ChCh_bd}. From our proofs, it follows that a systolic or Helly complex 
$X$ has at most $\sum_{u,u'\in \partial X} d(u,u')$ vertices. 
We also showed that finite Helly complexes  and 2-dimensional systolic complexes can be reconstructed in 
time polynomial in their number of vertices.  However, we did 
not succeed to prove a similar result for all systolic 
complexes. The bottleneck of our approach is how to efficiently compute a dominated vertex, based only on the
boundary information. We leave this as an open question. 

\begin{question}
    Given the distance matrix $D$ of the boundary $\partial X$ of a finite systolic complex $X$, is there an algorithm polynomial in the number of vertices of $X$ for finding a dominated vertex of $X$, and thus for reconstructing $X$?
\end{question}

Our algorithms for the reconstruction of Helly complexes and 2-dimensional systolic complexes run in $O(n_0\cdot m)$ time. Our reconstruction method seems to not be implementable with better complexity bounds. Nevertheless, it will be interesting to investigate if the reconstruction can be done faster: 

\begin{question}
Is it possible to reconstruct a finite 2-dimensional systolic
complex (respectively, a Helly complex) from its boundary in 
quadratic $O(n^2)$ time (or in a time linear in the size of the
input and output, i.e., $O(n_0^2+m)$)?  
\end{question}

Question 2 of Haslegrave et al.~\cite{HaScTaTa} asks under what conditions is a finite simplicial complex reconstructible from its boundary distances. In our paper, the dismantlability of bridged and Helly graphs was essential. Therefore, one can ask whether the class of clique complexes of dismantlable graphs is boundary rigid.  This is not true, as the following simple example shows.

\begin{figure}[h]
\begin{center}
\includegraphics{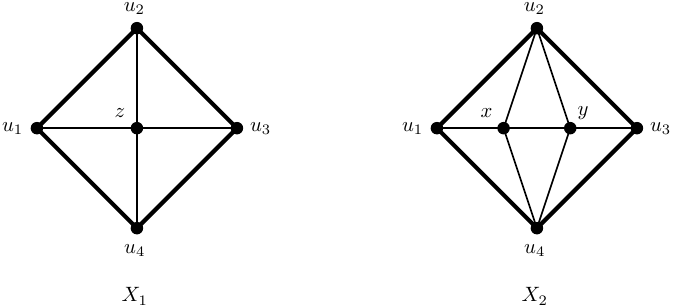}
\end{center}
\caption{Two simplicial complexes $X_1$ and $X_2$ with dismantlable 1-skeletons. 
$X_1$ and $X_2$ cannot be distinguished from their
  boundary information (the boundary of each complex is the downward
  closure of the thick edges).}\label{fig-cex-dism}
\end{figure}

\begin{example} \label{ex:dism_not_reconstr} Let $X_1$ be the clique
  complex of the graph $G_1$, which is the 4-wheel (a square
  $S=u_1u_2u_3u_4$ plus a vertex $z$ adjacent to all vertices of $S$).
  Let $X_2$ be the clique complex of the graph $G_2$ which consists of
  the square $S$ plus two adjacent vertices $x$ and $y$, where both
  $x$ and $y$ are adjacent to $u_2,u_4$ and, additionally, $x$ is
  adjacent to $u_1$ and $y$ is adjacent to $u_3$ (See
  Figure~\ref{fig-cex-dism}). Then one can easily see that
  $\partial X_1$ and $\partial X_2$ coincide with the square $S$, and
  $S$ isometrically embeds in $G_1$ and $G_2$ (namely, in both graphs
  $d(u_1,u_3)=d(u_2,u_4)=2$ and
  $d(u_1,u_2)=d(u_2,u_3)=d(u_3,u_4)=d(u_4,u_1)=1$).  Both $G_1$ and
  $G_2$ are dismantlable: $u_1, u_2, u_3, u_4, z$ is a dismantling
  order of $G_1$ and $u_1, u_2, u_3, u_4, x, y$ is a dismantling
  order of $G_2$. 
\end{example} 

However, it will be interesting to investigate which classes of dismantlable complexes are boundary rigid. 
One  candidate class is the class of the clique complexes of the square $G^2$ of graphs $G$ with convex balls ($G^2$ has the same vertex-set as $G$ and $u,v$ are adjacent in $G^2$ if $d(u,v)\in \{ 1,2\}$ in $G$). It was shown in \cite{ChChGi} that for any graph $G$ with convex balls, any BFS order of $G$ is a dismantling order of $G^2$. While graphs with convex balls generalize bridged and weakly bridged graphs, their squares are not bridged or weakly bridged \cite{ChChGi}. 

\begin{question} Is the class of the clique complexes of the squares $G^2$ of graphs with convex balls $G$ boundary rigid? 
\end{question}

A related question is whether our Theorems \ref{thm:boundary_rigidity_systolic} and \ref{thm:boundary_rigidity_helly} admit a common generalization:

\begin{question} Is there a characterization of the class of dismantlable (or  weakly modular) graphs whose clique complexes are boundary rigid? 
\end{question}

A positive answer to this question seems nontrivial to us for two reasons. First, although in both proofs we use dismantlability, in each case the distance matrix is updated  using  different metric properties of the respective classes of graphs. Second, finding such a common generalization would imply the existence of a metric geometry generalizing systolic and injective (Helly) geometries, which seem to be quite apart. However, if it exists, then it will offer the possibility of a unified treatment of systolic and Helly groups. 

\medskip
It will be also interesting to investigate the discrete variants Conjecture 1.4 of Burago and Ivanov \cite{BuIv} about simple manifolds as minimal fillings. We say that a class $\mathfrak{X}$ of simplicial (or cube) complexes satisfies the \emph{minimal filling} property if for any two complexes $X$ and $Y$ from $\mathfrak{X}$ such that $X$ and $Y$ have isomorphic boundaries and $d_{G(X)}(x,y)\le d_{G(Y)}(x,y)$ for any $x,y\in \partial X\cong \partial Y$, the inequality $|X^0|\le |Y^0|$ holds.
Namely, we replace the volume of the manifold as in \cite{BuIv} by the number of vertices. More generally, we say that a class  $\mathfrak{X}$ of simplicial (or cube) complexes satisfies the \emph{universal minimal filling} property if for any  complex $X\in {\mathfrak X}$ and any simplicial (cube) complex $Y$ such that $X$ and $Y$ have isomorphic boundaries and $d_{G(X)}(x,y)\le d_{G(Y)}(x,y)$ for any $x,y\in \partial X\cong \partial Y$, the inequality $|X^0|\le |Y^0|$ holds. We conclude the paper with the following question: 

\begin{question} Do the classes of systolic, Helly, or CAT(0) cube complexes satisfy the minimal filling property? Do some of these classes satisfy the universal minimal filling property?  
\end{question}

\end{document}